\def\rr{{\mathbb R}}
\def\rn{{{\rr}^n}}
\def\fz{\infty}
\def\cm{{\mathcal M}}
\def\loc{{\rm{\,loc\,}}}
\def\lz{\lambda}
\def\hs{\hspace{0.3cm}}
\def\r{\right}
\def\lf{\left}
\def\bint{{\ifinner\rlap{\bf\kern.30em--}
\int\else\rlap{\bf\kern.35em--}\int\fi}\ignorespaces}
\def\sbint{{\ifinner\rlap{\bf\kern.32em--}
\hspace{0.078cm}\int\else\rlap{\bf\kern.45em--}\int\fi}\ignorespaces}
\newtheorem{theorem}{Theorem}[section]
\newtheorem{lemma}[theorem]{Lemma}
\newtheorem{corollary}[theorem]{Corollary}
\theoremstyle{definition}
\newtheorem{remark}[theorem]{Remark}
\newtheorem{property}[theorem]{Property}
\newtheorem{definition}[theorem]{Definition}
\numberwithin{equation}{section}
\numberwithin{equation}{section}
\numberwithin{equation}{section}
\begin{document}

\arraycolsep=1pt

\title{\Large\bf Characterization of Lipschitz Space via the Commutators of Fractional Maximal Functions on Variable Lebesgue Spaces\footnotetext{\hspace{-0.35cm} {\it 2020
Mathematics Subject Classification}. {42B35, 42B25, %(函数空间, 极大函数, Littlewood-Paley theory)
47B47, %(Commutators, derivations, elementary operators, etc.)
26A16.} %Lipschitz (H¨older) classes
\endgraf{\it Key words and phrases.} variable Lebesgue space, Lipschitz space,  maximal function, fractional maximal function, commutator.
\endgraf $^\ast$\,Corresponding author.
}}
\author{Xuechun Yang, Zhenzhen Yang and Baode Li$^\ast$}
\date{ }
\maketitle

\vspace{-0.8cm}

\begin{center}
\begin{minipage}{13cm}\small
{\noindent{\bf Abstract.}
We obtain some new characterizations of a variable version of Lipschitz spaces in terms of the boundedness of commutators of sharp maximal functions, fractional maximal functions or fractional maximal commutators in the context of the variable Lebesgue spaces, where the symbols of the commutators belong to the variable Lipschitz space.
A useful tool is that a symbol $b$ belongs a variable Lipschitz space of pointwise type if and only if $b$ belongs to a variable Lipschitz space of integral type.}
\end{minipage}
\end{center}
%%%%%%%%%%%%%%%%%%%%%%%%%%%%%%%%%%%%%%%%%%%%%%%%%%%%%%%%%%%%%%%%%%%%%%%%%
%%%%%%%%%%%%%%%%%%%%%%%%%%%  1.  Preliminaries  %%%%%%%%%%%%%%%%%%%%%%%%%
%%%%%%%%%%%%%%%%%%%%%%%%%%%%%%%%%%%%%%%%%%%%%%%%%%%%%%%%%%%%%%%%%%%%%%%%%

\section{Introduction\label{s1}}
It is well known that the commutators of a great variety of operators appearing in harmonic analysis are intimately related to the regularity properties of the solutions of certain partial differential equations, see for example \cite{bmr, bcm96, dr93, cr03,x07,pr17,tyyz,x17}.
%It is known that the Lipschitz space coincides with some Morrey-Companato space and can be characterized by mean oscillation, which is due to DeVore and Sharpley \cite{ds84} and Janson, Taibleson and Weiss \cite{jtw83}.
In 1976, a first result in this direction was established
by Coifman, Rochberg and Weiss in \cite{crw76}, where the authors proved that the space of the bounded mean oscillation functions, $BMO$, is characterized by the boundedness of the commutator of singular integral operators with symbol in $BMO$. Later, in 1995, Paluszy\'{n}ski \cite{p95} proved that Lipschitz spaces can be characterized by means of the boundedness between Lebesgue spaces of the commutators of fractional integral operators with symbols belonging to Lipschitz spaces. In 2017, Pradolini and Ramos \cite{pr17} further characterized a variable version of Lipschitz space via the boundedness between variable Lebesgue space of commutators by  Calder\'on-Zygmund operators with symbols belonging to variable
Lebesgue spaces. For more details for variable Lipschitz spaces, see \cite{rsv13}. Here
it is worth pointing out that the variable Lipschitz space can be reduced to the classical Lipschitz space and $BMO(\mathbb{R}^n)$ space as special cases (see Remark \ref{r2.5} below).
%In 1978, Janson \cite{j78} gave a characterization in terms of the boundedness of the commutators of singular integral operators with Lipschitz function symbols in this class.

%Let $T$ be the Calder\'on-Zygmund operator, the commutator $[b,\,T]$ generated by $T$ and a suitable function $b$ is given by
%\begin{equation}\label{e1.1}
%[b,\,T](f)(x)=T(b(x)-b)f(x)=b(x)T(f)(x)-T(bf)(x).
%\end{equation}

Besides, Lipschitz spaces can be also characterized via boundedness between Lebesgue spaces of the commutators of fractional maximal functions with symbols belonging to Lipschitz spaces. Let us begin with some definitions. As usual, a cube $Q\subset\mathbb{R}^n$ always means its sides parallel to the coordinate axes.
Denote by $|Q|$ the Lebesgue measure of $Q$ and $\chi_Q$ the characteristic function of $Q$.
For $f\in L^1_{\loc}(\mathbb{R}^n)$, we write $f_Q:=|Q|^{-1}\int_Qf(x)\,dx$. The {\it sharp maximal function} $\mathcal{M}^{\sharp}$ and the {\it fractional maximal function} $\mathcal{M}_\alpha$ are, respectively, defined by
\begin{equation*}
\mathcal{M}^{\sharp}f(x):=\sup_{Q\ni x}\frac{1}{|Q|}\int_Q |f(y)-f_Q|\,dy
\end{equation*}
and
\begin{equation*}
\mathcal{M}_\alpha f(x):=\sup_{Q\ni x}\frac{1}{|Q|^{1-\frac{\alpha}{n}}}\int_Q|f(y)|\,dy, \quad 0\le\alpha<n,
\end{equation*}
where the supremum is taken over all cubes $Q\subset\mathbb{R}^n$ containing $x$. When $\alpha=0$, simply
denote $\mathcal{M}:=\mathcal{M}_0$, which is exactly the {\it Hardy-Littlewood maximal function}.

The {\it fractional commutator of $\mathcal{M}_\alpha (0\leq \alpha<n)$} with a locally integrable function $b$ is defined by
\begin{equation*}
\mathcal{M}_{\alpha,\,b}(f)(x):=\sup_{Q\ni x}\frac{1}{|Q|^{1-\frac{\alpha}{n}}}\int_Q|b(x)-b(y)||f(y)|\,dy,
\end{equation*}
The {\it (nonlinear) commutators of $\mathcal{M}^{\sharp}$ and $\mathcal{M}_\alpha$$(0\leq \alpha<n)$} with
a locally integrable function $b$ are, respectively, defined by
\begin{equation}\label{e2.08}
[b,\,\mathcal{M}^{\sharp}](f):=b\mathcal{M}^{\sharp}(f)-\mathcal{M}^{\sharp}(bf)
\end{equation}
and
\begin{equation}\label{e2.10}
[b,\,\mathcal{M}_\alpha](f)(x):=b(x)\mathcal{M}_\alpha(f)(x)-\mathcal{M}_\alpha(bf)(x),
\end{equation}
where the supremum is taken over all cubes $Q\subset\mathbb{R}^n$ containing $x$. When $\alpha=0$, simply denote  $\mathcal{M}_b := \mathcal{M}_{0,\,b}$ and $[b,\,\mathcal{M}] := [b,\,\mathcal{M}_0]$.
We call $[b,\,\mathcal{M}_\alpha]$ the nonlinear commutator because it is not even a sublinear operator,
although the classical commutator $[b,\,T]$ of linear operator $T$ is a linear one. We would like to remark that the nonlinear
commutator $[b,\,\mathcal{M}_\alpha]$ and the maximal commutator $\mathcal{M}_{\alpha,\,b}$ essentially differ from each other.
For example, $\mathcal{M}_{\alpha,\,b}$ is positive and sublinear, but $[b,\,\mathcal{M}_\alpha]$ is neither positive nor sublinear.
%In 1990, by using the real interpolation techniques, Milman and Schonbek \cite{ms} obtained
%a commutator result, by which they obtained the $L^p$-boundedness of $[b,\,M]$ and $[b,\,M_\alpha](0<\alpha<n)$ when $b\in BMO(\mathbb{R}^n$) and $b\geq0$.

In 2000, Bastero, Milman and Ruiz \cite{bmr} obtained that $b\in BMO(\mathbb{R}^n)$ (i.e. $\cm^\sharp(b)\in L^\infty(\rn)$) and
$b^-:=-\min\{b,\,0\}\in L^\infty(\rn)$ if and only if commutators $[b,\,\mathcal{M}]$ or $[b,\,\cm^\sharp]$ is bounded on $L^p(\mathbb{R}^n)\,(1<p<\infty)$
%In 2017, Zhang \cite{z17} showed the boundedness of commutators $[b,\,\cm]$ (resp. $\cm_b$) in $L^p(\mathbb{R}^n)\,(1<p<\infty)$ if and only if $b\ge0$ (resp. $b$) belongs to a Lipschitz space.
In 2019, Zhang \cite{z19}, Si and Wu \cite{zsw19} further obtained that $b\ge0$ (resp. $b$) belongs to Lipschitz space if and only if the commutators $[b,\,\cm]$, $[b,\,\cm^\sharp]$ or $[b,\, \cm_\alpha]$ (resp. $\cm_b$ or $\cm_{\alpha,\,b}$) is bounded on variable Lebesgue spaces.
%On the other hand, in 2017, Pradolini and Ramos \cite{pr17} obtained the boundedness of commutators by  Calder\'on-Zygmund operators and locally integrable function $b$ in variable Lebesgue spaces if and only if
%$b$ belongs to a variable version of Lipschitz space. For more details for variable Lipschitz spaces, see \cite{rsv13}. Here
%it is worth pointing out that the variable Lipschitz space can be reduced to the classical Lipschitz space and $BMO(\mathbb{R}^n)$ space as special cases (see Remark \ref{r2.5} below).

Inspired by Pradolini et al. \cite{pr17} and Zhang et al. \cite{z19, zsw19}, a natural question is whether
variable Lipschitz space can be also characterized via the boundedness between variable Lebesgue spaces of the commutators of the fractional maximal functions with symbols belonging to variable Lipschitz space? This paper gives an affirmative answer. Precisely, we obtain that $b\ge 0$ (resp. $b$) belongs to variable Lipschitz space if and only if the commutators of $[b,\,\mathcal{M}^{\sharp}]$, $[b,\,\mathcal{M}_\alpha]$, or $[b,\,\mathcal{M}]$ (resp. $\mathcal{M}_{\alpha,\,b}$ or $\mathcal{M}_b$) are bounded on variable Lebesgue spaces. (see Theorems \ref{t3.3} \ref{t3.4}, \ref{t3.5} and Corollaries \ref{t3.1}, \ref{t3.2} below).

To be precise, this article is organized as follows.

In Section \ref{s2}, we first recall notation, definitions and properties of variable Lebesgue spaces and variable Lipschitz spaces. Then we prove the equivalence of variable Lipschitz space of pointwise type and variable Lipschitz space of integral type. As applications, the boundedness results of commutators $[b,\,\mathcal{M}^{\sharp}]$, $\mathcal{M}_{\alpha,\,b}$, $[b,\,\mathcal{M}_\alpha]$, $\mathcal{M}_b$ and $[b,\,\mathcal{M}]$ are obtained in  Section \ref{s3} (see Theorems \ref{t3.3}, \ref{t3.4} and \ref{t3.5}, Corollaries \ref{t3.1} and \ref{t3.2} below). The main idea is to improve the relevant proofs on the Lipschitz space (\cite[Theorems 1.3]{z19} and \cite[Theorems 1.3 and 1.1]{zsw19}) to the variable Lipschitz space. Here, by referring to \cite{ks03,ms12}, the boundedness of variable fractional maximal functions on variable Lebesgue spaces plays a key role (see Remark \ref{r2.1} below).

\section*{Notations}

\ \,\ \ $-C:$ a positive constant which is independent of the main parameters;

$-\mathbb{C}:$ the set of complex numbers;

$-Q:$ a cube in $\mathbb{R}^n$ with sides parallel to the coordinate axes;

$-|Q|:$ the Lebesgue measure of $Q$;

$-\chi_Q:$ the characteristic function of $Q$.

%%%%%%%%%%%%%%%%%%%%%%%%%%%%%%%%%%%%%%%%%%%%%%%%%%%%%%%%%%%%%%%%%%%%%

%%%%%%%%%%%%%%%%%%%%%%%  section 2  %%%%%%%%%%%%%%%%%%%%%%%%%%%%%%%%%

%%%%%%%%%%%%%%%%%%%%%%%%%%%%%%%%%%%%%%%%%%%%%%%%%%%%%%%%%%%%%%%%%%%%%

\section{Variable Lipschitz spaces \label{s2}}
Let us recall two kinds of variable Lipschitz spaces and prove their equivalence.

For any measurable function $p(\cdot):\mathbb{R}^n\rightarrow(1,\,\infty)$, let
\begin{equation*}
p_{-}:=\mathop\mathrm{\,inf\,}_{x\in \mathbb{R}^n}p(x), \qquad p_{+}:=\mathop\mathrm{\,sup\,}_{x\in \mathbb{R}^n}p(x).
\end{equation*}
Define $\mathcal{P}(\mathbb{R}^{n})$ the set of all measurable functions
$p(\cdot)$ satisfying $1<p_{-}\leq p_{+}<\infty$.

For any measurable function $p(\cdot)\in \mathcal{P}(\mathbb{R}^{n})$,
the {\it variable Lebesgue space} $L^{p(\cdot)}(\mathbb{R}^{n})$ denotes the set of
measurable functions $f$ on $\mathbb{R}^n$ such that, for some $\lambda>0$,
\begin{equation*}
\varrho_{p(\cdot)}(f/\lambda):=\int_{\mathbb{R}^n}\lf(\frac{|f(x)|}{\lambda}\r)^{p(x)}\,dx<\infty.
\end{equation*}
This set becomes a Banach function space when equipped with the Luxemburg-Nakano norm
\begin{equation*}
\|f\|_{L^{p(\cdot)}(\mathbb{R}^n)}:=\inf\lf\{ \lz\in(0,\,\fz):\ \int_{\mathbb{R}^n}\lf(\frac{|f(x)|}
{\lambda}\r)^{p(x)}\,dx\le 1\r\}.
\end{equation*}

Let $\mathcal{P}^{\log}(\mathbb{R}^{n})$ be the set of all functions $p(\cdot)\in \mathcal{P}(\mathbb{R}^{n})$
satisfying the {\it globally log-H\"{o}lder continuous condition}, namely, there exist
$C_{\log}(p)$, $C_{\infty}\in(0, \infty)$ and $p_{\infty}\in\mathbb{R}$ such that,
for any $x, y\in\mathbb{R}^{n}$,
\begin{equation}\label{e2.1}
|p(x)-p(y)|\leq\frac{C_{\log}(p)}{\ln (e+\frac{1}{|x-y|})}
\end{equation}
and
\begin{equation}\label{e2.2}
|p(x)-p_{\infty}|\leq\frac{C_{\infty}}{\ln (e+|x|)}.
\end{equation}
It is easy to note that the inequality \eqref{e2.2} implies that
$\lim_{|x|\rightarrow\infty}p(x)=p_{\infty}$.

The class $\mathcal{P}^{\log}(\rn)$ is introduced by Cruz-Uribe, Fiorenza and Neugebauer \cite{cfn03} to treat maximal functions in spaces with variable exponent on $\rn$; see the following property.

\begin{property}\label{p2.11}\cite[Theorem 1.5]{cfn03}
If $p(\cdot)\in \mathcal{P}^{\log}(\rn)$, then the maximal operator $\mathcal{M}$ is bounded on $\mathop L\nolimits^{p\left(  \cdot  \right)} \left( {\mathop \mathbb{R}\nolimits^n } \right)$.
\end{property}

Given $p(\cdot)\in \mathcal{P}(\mathbb{R}^{n})$, define the conjugate exponent
$p'(\cdot)$ by the equation
$$\frac{1}{p(\cdot)}+\frac{1}{p'(\cdot)}=1.$$

It is easy to check that if $p(\cdot)\in \mathcal{P}(\mathbb{R}^n)$
then $p'(\cdot)\in \mathcal{P}(\mathbb{R}^n)$. The following are some basic properties of variable Lebesgue spaces.

\begin{property}\label{p2.2}
\item[{(i)}]\cite[Theorem 2.26]{cf13} (H\"older's inequality) Let $p(\cdot)\in \mathcal{P}(\mathbb{R}^{n})$. Then there exists a positive constant $C$ such that, for all $f\in L^{p(\cdot)}(\mathbb{R}^n)$ and
$g\in L^{p^\prime(\cdot)}(\mathbb{R}^n)$,\\
$$\int_{\mathbb{R}^n}|f(x)g(x)|\,dx\leq \|f\|_{L^{p(\cdot)}(\mathbb{R}^n)}\|g\|_{L^{p^\prime(\cdot)}(\mathbb{R}^n)}.$$
\item[{(ii)}]\cite[Corollary 2.28]{cf13} Let $p(\cdot),\,p_1(\cdot),\,p_2(\cdot)\in \mathcal{P}(\mathbb{R}^{n})$ and $1/p(\cdot)=1/p_1(\cdot)+1/p_2(\cdot)$. Then there exists a positive constant $C$ such that, for all $f\in L^{p_1(\cdot)}(\mathbb{R}^n)$ and $g\in L^{p_2(\cdot)}(\mathbb{R}^n)$,\\
$$\|fg\|_{L^{p(\cdot)}(\mathbb{R}^n)}\leq C\|f\|_{L^{p_1(\cdot)}(\mathbb{R}^n)}\|g\|_{L^{p_2(\cdot)}(\mathbb{R}^n)}.$$

\item[{(iii)}]\cite[Lemma 2.3]{cw14} Let $p(\cdot)\in \mathcal{P}(\mathbb{R}^{n})$ and $s\in(0, \infty)$. Then for $f\in L^{p(\cdot)}(\mathbb{R}^{n})$,
$$\||f|^{s}\|_{L^{p(\cdot)}(\mathbb{R}^{n})}=\|f\|^{s}_{L^{sp(\cdot)}(\mathbb{R}^{n})}.$$
\item[{(iv)}]\cite[P. 21]{cf13} For any $\lambda\in\mathbb{C}$ and $f,\ g\in L^{p(\cdot)}(\mathbb{R}^{n})$,
$$\|\lambda f\|_{L^{p(\cdot)}(\mathbb{R}^{n})}= |\lambda|\|f\|_{L^{p(\cdot)}(\mathbb{R}^{n})}.$$
\end{property}

Next, let us recall two kinds of definitions of variable Lipschitz spaces.

\begin{definition}\label{d2.4}
 Let $p(\cdot)\in \mathcal{P}(\mathbb{R}^{n})$, $0\le\delta(\cdot)<n$, $1<\beta\leq p_-$ and $ \delta(\cdot)/n=1/\beta-1/p(\cdot)$.

\begin{enumerate}
\item[(i)] We say a function $b$ belongs to the {\it variable Lipschitz space $\mathbb{L}(\delta(\cdot))$ of pointwise type} if there exists a constant $C$ such that, for all $x,y\in \mathbb{R}^n$,
$$|b(x)-b(y)|\leq C|x-y|^{\delta(x)}.$$
The smallest such constant $C$ is called the $\mathbb{L}(\delta(\cdot))$ norm of $b$ and is denoted by $\|b\|_{\mathbb{L}(\delta(\cdot))}$.
\item[(ii)] \cite[Definition 1.7]{rsv13} We say a function $b$ belongs to the {\it variable Lipschitz space of integral type} if there exists a constant $C$ such that
$$\|b\|_{\mathbb{\widetilde L}(\delta(\cdot))}=\sup_Q\frac{1}{|Q|^{\frac{1}
{\beta}}\|\chi_Q\|_{L^{p^\prime(\cdot)}(\mathbb{R}^n)}}\int_Q|b(x)-b_Q|\, dx<\infty.$$
The smallest such constant $C$ is called the $\mathbb{\widetilde L}(\delta(\cdot))$ norm of $b$ and is denoted by $\|b\|_{\mathbb{\widetilde L}(\delta(\cdot))}$.
\end{enumerate}
\end{definition}

\begin{remark}\label{r2.5}
When $p(x)$ is equal to a constant $p$, the variable Lipschitz space $\mathbb{L}(\delta(\cdot))$ coincides with the classical Lipschitz space $\mathbb{L}(n/\beta-n/p)$. Moreover, when $\beta=p(x)\equiv p$, the space $\mathbb{\widetilde L}(n/\beta-n/p)$ coincides
with the bounded mean oscillation space $BMO(\rn)$.
\end{remark}

\begin{property}\label{p2.1}
Let $p(\cdot)\in \mathcal{P}^{\log}(\mathbb{R}^{n})$, $0\le\delta(\cdot)<n$, $1<\beta<p_-$, $p(x)\ge p_\infty$ for a.e. $x\in\rn$ and $ \delta(\cdot)/n=1/\beta-1/p(\cdot)$. Then $\mathbb{L}(\delta(\cdot))=\mathbb{\widetilde L}(\delta(\cdot))$ with equivalent norms.
\end{property}

\begin{proof}
Under the assumptions of $p(\cdot)\in \mathcal{P}^{\log}(\mathbb{R}^{n})$, $0\le\delta(\cdot)<n$, $1<\beta<p_-$, $p'(x)\le (p')_\infty$ (implies by $p(x)\ge p_\infty$) for a.e. $x\in\rn$ and $0\leq \delta(\cdot)/n=1/\beta-1/p(\cdot)<1$,
by \cite[(5.4)]{pr17}, we know that $b\in \mathbb{\widetilde L}(\delta(\cdot))\Rightarrow b\in \mathbb{L}(\delta(\cdot))$ and $\|b\|_{\mathbb{L}(\delta(\cdot))}\le C \|b\|_{\mathbb{\widetilde L}(\delta(\cdot))}$.

Conversely, let us prove $b\in \mathbb{L}(\delta(\cdot))\Rightarrow b\in \mathbb{\widetilde L}(\delta(\cdot))$. For any fixed cube $Q$ and any $x\in Q$, we have
\begin{align*}
|b(x)-b_Q|
&\leq \frac{1}{|Q|}\int_Q|b(x)-b(y)|\,dy\\
&\leq \|b\|_{\mathbb{L}(\delta(\cdot))} \frac{1}{|Q|}\int_Q|x-y|^{\delta(x)}\,dy
\leq \|b\|_{\mathbb{L}(\delta(\cdot))}|Q|^{\frac{\delta(x)}{n}}.
\end{align*}
From this, Property \ref{p2.2}(i), $\delta(\cdot)/n=1/\beta-1/p(\cdot)$ and Property \ref{p2.2}(iv), it follows that
\begin{align*}
\int_Q|b(x)-b_Q|\,dx
&\leq \|b\|_{\mathbb{L}(\delta(\cdot))}\int_Q|Q|^{\frac{\delta(x)}{n}}\chi_Q(x)\,dx\\
&\leq \|b\|_{\mathbb{L}(\delta(\cdot))}\left\||Q|^{\frac{\delta(\cdot)}{n}}\right\|_{L^{p(\cdot)}(Q)}\|\chi_Q\|_{L^{p^\prime(\cdot)}(Q)}\\
&=\|b\|_{\mathbb{L}(\delta(\cdot))}\lf\||Q|^{\frac{1}{\beta}-\frac{1}{p(\cdot)}}\r\|_{L^{p(\cdot)}(Q)}\|\chi_Q\|_{L^{p^\prime(\cdot)}(Q)}\\
&=\|b\|_{\mathbb{L}(\delta(\cdot))}|Q|^{\frac{1}{\beta}}\lf\||Q|^{-\frac{1}{p(\cdot)}}\r\|_{L^{p(\cdot)}(Q)}\|\chi_Q\|_{L^{p^\prime(\cdot)}(Q)}\\
&\leq \|b\|_{\mathbb{L}(\delta(\cdot))}|Q|^{\frac{1}{\beta}}\|\chi_Q\|_{L^{p^\prime(\cdot)}(Q)},
\end{align*}
which implies that
\begin{align*}
\frac{1}{|Q|^{\frac{1}
{\beta}}\|\chi_Q\|_{L^{p^\prime(\cdot)}(\mathbb{R}^n)}}\int_Q|b(x)-b_Q|\, dx\leq \|b\|_{\mathbb{L}(\delta(\cdot))}.
\end{align*}
Then by taking the supremum over all cubes $Q$ on both sides of the above inequality, we obtain $b\in \mathbb{\widetilde L}(\delta(\cdot))$ and $\|b\|_{\mathbb{\widetilde L}(\delta(\cdot))}\le \|b\|_{\mathbb{L}(\delta(\cdot))}$.
The proof of Property \ref{p2.1} is completed.
\end{proof}

\section{Some characterizations of Variable Lipschitz Spaces \label{s3}}
This section is devoted to show some characterizations of variable Lipschitz spaces via the boundedness of commutators of $[b,\,\mathcal{M}^\sharp]$, $\mathcal{M}_{\alpha,\,b}$, $[b,\,\mathcal{M}_\alpha]$ $\mathcal{M}_b$ or $[b,\,\mathcal{M}]$ on variable Lebesgue spaces.

\begin{theorem}\label{t3.3}
Let $0<\delta(\cdot)<n$, $p(\cdot), q(\cdot),\,r(\cdot)\in\mathcal{P}^{\log}(\mathbb{R}^{n})$ and $\beta\in(1, p_-)$ satisfying the following assumptions:
\begin{enumerate}
\item[(i)] $\delta_->0$, $(r\delta)_+<n$ and $r_\infty\delta_+<n$;
\item[(ii)] $p(x)\ge p_\infty$ for a.e. $x\in\rn$ and $1/\beta+1/(p')_+>1$;
\item[(iii)]  $1/r(\cdot)-1/q(\cdot)=\delta(\cdot)/n=1/\beta-1/p(\cdot)$.
\end{enumerate}
Then $b\in \mathbb{L}(\delta(\cdot))$ and $b\ge 0$ if and only if $[b,\,\mathcal{M}^\sharp]: L^{r(\cdot)}(\rn)\rightarrow L^{q(\cdot)}(\rn)$.
\end{theorem}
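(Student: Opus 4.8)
The plan is to prove the two implications separately. Throughout, write $\mathcal{M}_{\delta(\cdot)}$ for the fractional maximal operator of variable order $\delta(\cdot)$, that is $\mathcal{M}_{\delta(\cdot)}f(x):=\sup_{Q\ni x}|Q|^{\delta(x)/n-1}\int_Q|f(y)|\,dy$; under assumptions (i) and (iii) this operator is bounded from $L^{r(\cdot)}(\rn)$ to $L^{q(\cdot)}(\rn)$ (see Remark~\ref{r2.1}, cf.\ \cite{ks03,ms12}), and this is the only nontrivial external input for the necessity part.

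\emph{Necessity.} Assuming $b\ge 0$ and $b\in\mathbb{L}(\delta(\cdot))$, I would reduce everything to the pointwise domination
\[
|[b,\mathcal{M}^\sharp](f)(x)|\le C\,\|b\|_{\mathbb{L}(\delta(\cdot))}\,\mathcal{M}_{\delta(\cdot)}(f)(x)\qquad\text{for a.e. }x\in\rn .
\]
To obtain it, fix $x$ and a cube $Q\ni x$ and split $b(y)f(y)-(bf)_Q=[b(y)-b(x)]f(y)+b(x)[f(y)-f_Q]+[b(x)f_Q-(bf)_Q]$. The first and third summands are each dominated by $\|b\|_{\mathbb{L}(\delta(\cdot))}|x-y|^{\delta(x)}\ls\|b\|_{\mathbb{L}(\delta(\cdot))}|Q|^{\delta(x)/n}$ (here $\delta_+<n$, which follows from (i)), so their $Q$-averages are $\ls\|b\|_{\mathbb{L}(\delta(\cdot))}\mathcal{M}_{\delta(\cdot)}(f)(x)$; the middle summand contributes $b(x)|Q|^{-1}\int_Q|f-f_Q|\le b(x)\mathcal{M}^\sharp(f)(x)$ because $b(x)\ge 0$. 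Taking the supremum over $Q\ni x$ gives $\mathcal{M}^\sharp(bf)(x)\le b(x)\mathcal{M}^\sharp(f)(x)+C\|b\|_{\mathbb{L}(\delta(\cdot))}\mathcal{M}_{\delta(\cdot)}(f)(x)$, i.e.\ the lower bound for $[b,\mathcal{M}^\sharp](f)(x)$. For the matching upper bound I would use the same splitting with the triangle inequality in the form $|A+B+C|\ge|A|-|B|-|C|$ (again using $b(x)\ge0$ to write $|b(x)[f(y)-f_Q]|=b(x)|f(y)-f_Q|$) and choose a cube nearly attaining $\mathcal{M}^\sharp(f)(x)$, which yields $\mathcal{M}^\sharp(bf)(x)\ge b(x)\mathcal{M}^\sharp(f)(x)-C\|b\|_{\mathbb{L}(\delta(\cdot))}\mathcal{M}_{\delta(\cdot)}(f)(x)$. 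Combining the two bounds gives the displayed estimate, and the boundedness of $[b,\mathcal{M}^\sharp]:L^{r(\cdot)}(\rn)\to L^{q(\cdot)}(\rn)$ is then immediate.

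\emph{Sufficiency.} Now assume $[b,\mathcal{M}^\sharp]:L^{r(\cdot)}(\rn)\to L^{q(\cdot)}(\rn)$ is bounded. By Property~\ref{p2.1} it suffices to prove $b\ge 0$ and $b\in\mathbb{\widetilde L}(\delta(\cdot))$, and I would extract both by testing on $f=\chi_Q$. A direct computation gives $\mathcal{M}^\sharp(\chi_Q)(x)=\tfrac{1}{2}$ for $x\in Q$, so $[b,\mathcal{M}^\sharp](\chi_Q)(x)=\tfrac{1}{2} b(x)-\mathcal{M}^\sharp(b\chi_Q)(x)$ there; moreover, evaluating $\mathcal{M}^\sharp(b\chi_Q)$ on $Q$ itself and on a cube $R\supset Q$ with $|R|=2|Q|$ yields $\mathcal{M}^\sharp(b\chi_Q)(x)\ge\max\big\{|Q|^{-1}\int_Q|b-b_Q|,\ \tfrac{1}{2}|b_Q|\big\}$ for $x\in Q$. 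To see $b\ge0$: if $b(x_0)<0$ at a Lebesgue point $x_0$ of $b$, then for small cubes $Q\ni x_0$ one has $|b_Q|\ge|b(x_0)|/2$ and $|[b,\mathcal{M}^\sharp](\chi_Q)|\gtrsim|b(x_0)|$ on a subset of $Q$ of measure $\ge|Q|/2$, whence $|b(x_0)|\,\|\chi_Q\|_{L^{q(\cdot)}(\rn)}\ls\|\chi_Q\|_{L^{r(\cdot)}(\rn)}$ by the boundedness hypothesis; since $\|\chi_Q\|_{L^{s(\cdot)}(\rn)}\approx|Q|^{1/s(x_0)}$ as $|Q|\to0$ and $1/r(x_0)-1/q(x_0)=\delta(x_0)/n\ge\delta_-/n>0$, letting $|Q|\to0$ forces $|b(x_0)|=0$, a contradiction. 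With $b\ge0$ established, for an arbitrary cube $Q$ set $\mathrm{osc}_Q=|Q|^{-1}\int_Q|b-b_Q|$; on $E_-=\{x\in Q:b(x)\le b_Q\}$ one has $\tfrac{1}{2} b(x)\le\tfrac{1}{2} b_Q\le\mathcal{M}^\sharp(b\chi_Q)(x)$, so $|[b,\mathcal{M}^\sharp](\chi_Q)(x)|=\mathcal{M}^\sharp(b\chi_Q)(x)-\tfrac{1}{2} b(x)\ge\tfrac{1}{2}(b_Q-b(x))$, and integrating over $E_-$ gives $\int_Q|b-b_Q|\le 4\int_Q|[b,\mathcal{M}^\sharp](\chi_Q)|$. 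By Hölder's inequality (Property~\ref{p2.2}(i)) and the boundedness hypothesis this is $\ls\|\chi_Q\|_{L^{r(\cdot)}(\rn)}\|\chi_Q\|_{L^{q'(\cdot)}(\rn)}$.

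\emph{Conclusion and the main obstacle.} It remains to prove $\|\chi_Q\|_{L^{r(\cdot)}(\rn)}\|\chi_Q\|_{L^{q'(\cdot)}(\rn)}\ls|Q|^{1/\beta}\|\chi_Q\|_{L^{p'(\cdot)}(\rn)}$ uniformly in $Q$; dividing the previous estimate by $|Q|^{1/\beta}\|\chi_Q\|_{L^{p'(\cdot)}(\rn)}$ and taking $\sup_Q$ then gives $b\in\mathbb{\widetilde L}(\delta(\cdot))$, hence $b\in\mathbb{L}(\delta(\cdot))$ by Property~\ref{p2.1}. I expect this step to be the real obstacle: in the variable setting $\|\chi_Q\|_{L^{s(\cdot)}(\rn)}$ is not a power of $|Q|$, so one must combine $\|\chi_Q\|_{L^{s(\cdot)}(\rn)}\|\chi_Q\|_{L^{s'(\cdot)}(\rn)}\approx|Q|$, the generalized Hölder inequality of Property~\ref{p2.2}(ii) applied to $\chi_Q=\chi_Q\cdot\chi_Q$, the identity (iii) (which also passes to the limits $p_\infty,q_\infty,r_\infty$), and the log-Hölder asymptotics $\|\chi_Q\|_{L^{s(\cdot)}(\rn)}\approx|Q|^{1/s(c_Q)}$ for small $Q$ and $\approx|Q|^{1/s_\infty}$ for large $Q$, together with the hypotheses $p(x)\ge p_\infty$ and $1/\beta+1/(p')_+>1$, to check that the exponents on the two sides match. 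A further point requiring care, already present at the constant-exponent level of \cite{z19,bmr}, is that $[b,\mathcal{M}^\sharp]$ is nonlinear, so both directions must be run through two-sided pointwise comparisons of $\mathcal{M}^\sharp(b\,\cdot)$ with $b\,\mathcal{M}^\sharp(\cdot)$ (and of $\mathcal{M}^\sharp(b\chi_Q)$ with $\tfrac{1}{2} b$) rather than by linear duality, and the sign condition $b\ge0$ is used precisely at those comparison steps.
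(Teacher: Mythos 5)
Your forward direction (the pointwise domination $|[b,\mathcal{M}^\sharp]f(x)|\le C\|b\|_{\mathbb{L}(\delta(\cdot))}\mathcal{M}_{\delta(\cdot)}f(x)$, obtained from the splitting $b(y)f(y)-(bf)_Q=[b(y)-b(x)]f(y)+b(x)[f(y)-f_Q]+[b(x)f_Q-(bf)_Q]$ with $b\ge0$, followed by Remark \ref{r2.1}) and your testing strategy for the converse (the test function $\chi_Q$, $\mathcal{M}^\sharp(\chi_Q)=\tfrac12$ on $Q$, the bound $|b_Q|\le 2\mathcal{M}^\sharp(b\chi_Q)$, the set $E$ and the mean-value trick reducing $\int_Q|b-b_Q|$ to $\|\chi_Q\|_{L^{r(\cdot)}(\rn)}\|\chi_Q\|_{L^{q'(\cdot)}(\rn)}$) coincide with the paper's proof. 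The genuine gap is precisely the step you yourself flag as ``the real obstacle'': you never prove the inequality $\|\chi_Q\|_{L^{r(\cdot)}(\rn)}\|\chi_Q\|_{L^{q'(\cdot)}(\rn)}\le C|Q|^{1/\beta}\|\chi_Q\|_{L^{p'(\cdot)}(\rn)}$, you only list ingredients (local and global asymptotics of $\|\chi_Q\|_{L^{s(\cdot)}(\rn)}$ and an exponent check in the small-cube and large-cube regimes), and without this inequality the implication ``bounded $\Rightarrow b\in\mathbb{\widetilde L}(\delta(\cdot))$'' is not established. The paper closes it in two short steps with no asymptotics and no case analysis: since $1/q'(\cdot)=1/p'(\cdot)+\bigl(1/\beta-1/r(\cdot)\bigr)$, Property \ref{p2.2}(ii) gives $\|\chi_Q\|_{L^{q'(\cdot)}(\rn)}\le C\|\chi_Q\|_{L^{p'(\cdot)}(\rn)}\|\chi_Q\|_{L^{h'(\cdot)}(\rn)}$ with $1/h'(\cdot):=1/\beta-1/r(\cdot)$, and then Lemma \ref{l2.4}(ii), applied with $\gamma=n(1-1/\beta)$ after checking $h_+=\bigl(1-1/\beta+1/r_+\bigr)^{-1}<(1-1/\beta)^{-1}=n/\gamma$, yields $\|\chi_Q\|_{L^{r(\cdot)}(\rn)}\|\chi_Q\|_{L^{h'(\cdot)}(\rn)}\le C|Q|^{1/\beta}$. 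If you insist on your asymptotic route, you must prove or cite the two-sided estimates $\|\chi_Q\|_{L^{s(\cdot)}(\rn)}\approx|Q|^{1/s(x)}$ for small $Q\ni x$ and $\approx|Q|^{1/s_\infty}$ for large $Q$; nothing of that strength is available in the paper (Lemma \ref{l3.4} is one-sided and only involves the global exponent).

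A secondary divergence is the proof of $b\ge0$. The paper first proves the oscillation estimate, then combines $|b_Q|\le2\mathcal{M}^\sharp(b\chi_Q)$ with Lemma \ref{l3.4} and the hypothesis $1/\beta+1/(p')_+>1$ to let $|Q|\to0$ and conclude $b^-=0$ by Lebesgue differentiation. Your Lebesgue-point contradiction is different and arguably cleaner in that it does not use $1/\beta+1/(p')_+>1$, but it again rests on the unproven two-sided asymptotics $\|\chi_Q\|_{L^{s(\cdot)}(\rn)}\approx|Q|^{1/s(x_0)}$ together with the subset comparison $\|\chi_S\|_{L^{q(\cdot)}(\rn)}\gtrsim\|\chi_Q\|_{L^{q(\cdot)}(\rn)}$ for $|S|\ge|Q|/2$, neither of which is in the paper; so as written it carries the same missing lemma as your main step. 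Either supply that lemma (it is standard for $\mathcal{P}^{\log}$ exponents) or run the paper's argument, which needs only what is already proved in Section \ref{s3}.
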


\begin{theorem}\label{t3.4}
Let $0\le\alpha<n$, $0<\delta(\cdot)<n$, $p(\cdot), q(\cdot),\,r(\cdot)\in\mathcal{P}^{\log}(\mathbb{R}^{n})$ and $\beta\in(1, p_-)$ satisfying the following assumptions:
\begin{enumerate}
\item[(i)] $\delta_->0$, $(r(\alpha+\delta))_+<n$ and $r_\infty\lf(\alpha+\delta(\cdot)\r)_+<n$;
\item[(ii)] $p(x)\ge p_\infty$ for a.e. $x\in\rn$;
\item[(iii)]  $1/\beta-1/p(\cdot)=\delta(\cdot)/n=1/r(\cdot)-\alpha/n-1/q(\cdot)$.
\end{enumerate}
Then $b\in \mathbb{L}(\delta(\cdot))$ if and only if $\mathcal{M}_{\alpha,\,b}$: $L^{r(\cdot)}(\rn) \rightarrow L^{q(\cdot)}(\rn)$.
\end{theorem}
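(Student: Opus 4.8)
The plan is to prove the two implications separately, using Property \ref{p2.1} to pass freely between $\mathbb{L}(\delta(\cdot))$ and $\mathbb{\widetilde L}(\delta(\cdot))$; note that the hypotheses of Theorem \ref{t3.4} (namely $p(\cdot)\in\mathcal{P}^{\log}$, $0<\delta(\cdot)<n$, $1<\beta<p_-$, $p(x)\ge p_\infty$ a.e., and $\delta(\cdot)/n=1/\beta-1/p(\cdot)$) match exactly those of Property \ref{p2.1}, so the integral-type characterization is available throughout. I will also record at the outset the key background fact (cf.\ Remark \ref{r2.1}, following \cite{ks03,ms12}) that under assumption (i) and (iii) the variable fractional maximal operator $\mathcal{M}_\alpha$ is bounded from $L^{r(\cdot)}(\rn)$ to some variable Lebesgue space with the right exponent balance; this, combined with the trivial pointwise bound $\mathcal{M}_{\alpha,\,b}(f)(x)\le \|b\|_{\mathbb{L}(\delta(\cdot))}\,\mathcal{M}_{\alpha+\delta(\cdot)}(|f|)(x)$, will be the engine of the sufficiency direction.

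\emph{Necessity ($b\in\mathbb{L}(\delta(\cdot))\Rightarrow\mathcal{M}_{\alpha,\,b}$ bounded).} Using the pointwise Lipschitz estimate $|b(x)-b(y)|\le\|b\|_{\mathbb{L}(\delta(\cdot))}|x-y|^{\delta(x)}$, for any cube $Q\ni x$ I bound
\[
\frac{1}{|Q|^{1-\alpha/n}}\int_Q|b(x)-b(y)||f(y)|\,dy
\le \|b\|_{\mathbb{L}(\delta(\cdot))}\,\frac{|Q|^{\delta(x)/n}}{|Q|^{1-\alpha/n}}\int_Q|f(y)|\,dy
= \|b\|_{\mathbb{L}(\delta(\cdot))}\,\frac{1}{|Q|^{1-(\alpha+\delta(x))/n}}\int_Q|f(y)|\,dy,
\]
so that $\mathcal{M}_{\alpha,\,b}(f)(x)\le\|b\|_{\mathbb{L}(\delta(\cdot))}\,\mathcal{M}_{\alpha+\delta(\cdot)}(f)(x)$, where $\mathcal{M}_{\alpha+\delta(\cdot)}$ is the variable-order fractional maximal operator. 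By assumption (iii), $1/r(\cdot)-(\alpha+\delta(\cdot))/n=1/q(\cdot)$, and assumption (i) gives exactly the size conditions ($(r(\alpha+\delta))_+<n$ and $r_\infty(\alpha+\delta(\cdot))_+<n$) needed to invoke the boundedness of $\mathcal{M}_{\alpha+\delta(\cdot)}$ from $L^{r(\cdot)}(\rn)$ to $L^{q(\cdot)}(\rn)$ (Remark \ref{r2.1}). Hence $\|\mathcal{M}_{\alpha,\,b}(f)\|_{L^{q(\cdot)}}\lesssim\|b\|_{\mathbb{L}(\delta(\cdot))}\|f\|_{L^{r(\cdot)}}$.

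\emph{Sufficiency ($\mathcal{M}_{\alpha,\,b}$ bounded $\Rightarrow b\in\mathbb{L}(\delta(\cdot))$).} By Property \ref{p2.1} it suffices to show $b\in\mathbb{\widetilde L}(\delta(\cdot))$, i.e.\ to bound $|Q|^{-1/\beta}\|\chi_Q\|_{L^{p'(\cdot)}}^{-1}\int_Q|b(x)-b_Q|\,dx$ uniformly in $Q$. Fix a cube $Q$. For $x\in Q$, writing $b_Q=|Q|^{-1}\int_Q b$, I estimate $|b(x)-b_Q|\le|Q|^{-1}\int_Q|b(x)-b(y)|\,dy$; testing the operator against $f=\chi_Q$ and using $\alpha/n=1/r(\cdot)-1/q(\cdot)-\delta(\cdot)/n$, one has for $x\in Q$
\[
|Q|^{-1}\int_Q|b(x)-b(y)|\,dy
= |Q|^{-\alpha/n}\cdot\frac{1}{|Q|^{1-\alpha/n}}\int_Q|b(x)-b(y)|\chi_Q(y)\,dy
\le |Q|^{-\alpha/n}\,\mathcal{M}_{\alpha,\,b}(\chi_Q)(x),
\]
so $\int_Q|b(x)-b_Q|\,dx\le |Q|^{-\alpha/n}\int_Q\mathcal{M}_{\alpha,\,b}(\chi_Q)(x)\,dx$. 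Applying Hölder's inequality (Property \ref{p2.2}(i)) with the pair $q(\cdot),q'(\cdot)$, then the boundedness hypothesis $\|\mathcal{M}_{\alpha,\,b}(\chi_Q)\|_{L^{q(\cdot)}}\le C\|\chi_Q\|_{L^{r(\cdot)}}$, gives
\[
\int_Q|b(x)-b_Q|\,dx\le |Q|^{-\alpha/n}\,\|\mathcal{M}_{\alpha,\,b}(\chi_Q)\|_{L^{q(\cdot)}(\rn)}\,\|\chi_Q\|_{L^{q'(\cdot)}(\rn)}
\le C\,|Q|^{-\alpha/n}\,\|\chi_Q\|_{L^{r(\cdot)}(\rn)}\,\|\chi_Q\|_{L^{q'(\cdot)}(\rn)}.
\]
It then remains to verify the normalization identity
\[
|Q|^{-\alpha/n}\,\|\chi_Q\|_{L^{r(\cdot)}(\rn)}\,\|\chi_Q\|_{L^{q'(\cdot)}(\rn)}\lesssim |Q|^{1/\beta}\,\|\chi_Q\|_{L^{p'(\cdot)}(\rn)},
\]
which is where the exponent bookkeeping in assumption (iii) is consumed: from $1/r(\cdot)=\alpha/n+\delta(\cdot)/n+1/q(\cdot)$ and $\delta(\cdot)/n=1/\beta-1/p(\cdot)$ one recovers the needed relation between the three norms of $\chi_Q$, using the standard (log-Hölder) estimates for $\|\chi_Q\|_{L^{s(\cdot)}(\rn)}$ together with Property \ref{p2.2}(ii) to split and recombine exponents.

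\emph{Main obstacle.} The routine part is the Lipschitz pointwise bound and the Hölder step; the delicate part is the last normalization estimate for characteristic functions. For variable exponents $\|\chi_Q\|_{L^{s(\cdot)}(\rn)}$ is not simply $|Q|^{1/s(x_Q)}$, and one must control it via the log-Hölder continuity of the exponents (both the local condition \eqref{e2.1} and the decay condition \eqref{e2.2}), distinguishing small cubes from large ones and using $p(x)\ge p_\infty$ (equivalently $p'(x)\le(p')_\infty$) to handle the large-cube regime — this is precisely the role of assumption (ii). I expect to borrow the relevant characteristic-function norm estimates from \cite{cf13} or from the computations already used in Property \ref{p2.1} and in \cite{pr17}, rather than re-deriving them; the argument parallels the treatment of the fractional maximal commutator on Lipschitz spaces in \cite{zsw19}, with the constant exponents there replaced throughout by the variable ones and the power-of-$|Q|$ normalizations replaced by the corresponding $\|\chi_Q\|_{L^{s(\cdot)}}$ factors.
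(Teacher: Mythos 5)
Your proposal is correct and follows essentially the same route as the paper: the same pointwise bound $\mathcal{M}_{\alpha,\,b}(f)\le\|b\|_{\mathbb{L}(\delta(\cdot))}\mathcal{M}_{\alpha+\delta(\cdot)}(f)$ with Remark \ref{r2.1} for one direction, and for the converse the same test against $\chi_Q$, the pointwise estimate $|b(x)-b_Q|\chi_Q(x)\le|Q|^{-\alpha/n}\mathcal{M}_{\alpha,\,b}(\chi_Q)(x)$, H\"older with $(q(\cdot),q'(\cdot))$, and the reduction to $\mathbb{\widetilde L}(\delta(\cdot))$ via Property \ref{p2.1}. The normalization inequality you leave to ``standard estimates'' is exactly what the paper carries out by splitting $1/q'(\cdot)=1/p'(\cdot)+[1/\beta+\alpha/n-1/r(\cdot)]$ with Property \ref{p2.2}(ii) and then applying Lemma \ref{l2.4}(ii) to get $\|\chi_Q\|_{L^{r(\cdot)}(\rn)}\|\chi_Q\|_{L^{(\frac1\beta+\frac\alpha n-\frac1{r(\cdot)})^{-1}}(\rn)}\le C|Q|^{\frac1\beta+\frac\alpha n}$ (no small/large cube case distinction or use of $p(x)\ge p_\infty$ is needed there; that hypothesis enters only through Property \ref{p2.1}).
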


\begin{theorem}\label{t3.5}
Let $0\le\alpha<n$, $0<\delta(\cdot)<n$, $p(\cdot), q(\cdot),\,r(\cdot)\in\mathcal{P}^{\log}(\mathbb{R}^{n})$ and $\beta\in(1, p_-)$ satisfying the following assumptions:
\begin{enumerate}
\item[(i)] $\delta_->0$, $(r(\alpha+\delta))_+<n$ and $r_\infty\lf(\alpha+\delta(\cdot)\r)_+<n$;
\item[(ii)] $p(x)\ge p_\infty$ for a.e. $x\in\rn$ and $1/\beta+1/(p')_+>1$;
\item[(iii)]  $1/\beta-1/p(\cdot)=\delta(\cdot)/n=1/r(\cdot)-\alpha/n-1/q(\cdot)$.
\end{enumerate}
Then $b\in \mathbb{L}(\delta(\cdot))$ and $b\ge 0$ if and only if $[b,\,\mathcal{M}_\alpha]$: $L^{r(\cdot)}(\rn) \rightarrow L^{q(\cdot)}(\rn)$.
\end{theorem}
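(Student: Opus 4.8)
\emph{Necessity.} Suppose $b\in\mathbb{L}(\delta(\cdot))$ and $b\ge0$. The hypotheses assumed here contain those of Theorem~\ref{t3.4}, so $\mathcal{M}_{\alpha,\,b}\colon L^{r(\cdot)}(\rn)\to L^{q(\cdot)}(\rn)$ is bounded. Since $b\ge0$, for every $x$ we may write $b(x)\mathcal{M}_\alpha f(x)=\sup_{Q\ni x}|Q|^{\frac{\alpha}{n}-1}b(x)\int_Q|f(y)|\,dy$ and $\mathcal{M}_\alpha(bf)(x)=\sup_{Q\ni x}|Q|^{\frac{\alpha}{n}-1}\int_Qb(y)|f(y)|\,dy$; combining the elementary inequality $|\sup u-\sup v|\le\sup|u-v|$ with the identity $b(x)\int_Q|f|-\int_Qb|f|=\int_Q(b(x)-b(y))|f(y)|\,dy$ yields the pointwise bound $|[b,\,\mathcal{M}_\alpha](f)(x)|\le\mathcal{M}_{\alpha,\,b}(f)(x)$. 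Monotonicity of the $L^{q(\cdot)}$-norm then gives $\|[b,\,\mathcal{M}_\alpha]f\|_{L^{q(\cdot)}(\rn)}\le\|\mathcal{M}_{\alpha,\,b}f\|_{L^{q(\cdot)}(\rn)}\lesssim\|f\|_{L^{r(\cdot)}(\rn)}$, the asserted boundedness. (Here the extra hypothesis $1/\beta+1/(p')_+>1$ is not needed.)

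\emph{Sufficiency, Step 1: $b\ge0$.} Assume $[b,\,\mathcal{M}_\alpha]\colon L^{r(\cdot)}(\rn)\to L^{q(\cdot)}(\rn)$ is bounded, and test with $\chi_Q$ for a cube $Q$. A direct computation gives $\mathcal{M}_\alpha\chi_Q(x)=|Q|^{\alpha/n}$ for $x\in Q$, while $\mathcal{M}_\alpha(b\chi_Q)\ge0$ always. Hence, for a.e.\ $x\in Q$,
$$-[b,\,\mathcal{M}_\alpha]\chi_Q(x)=\mathcal{M}_\alpha(b\chi_Q)(x)-b(x)|Q|^{\alpha/n}\ge-b(x)|Q|^{\alpha/n},$$
so that $|[b,\,\mathcal{M}_\alpha]\chi_Q(x)|\ge\max\{0,-b(x)|Q|^{\alpha/n}\}=|Q|^{\alpha/n}b^-(x)$ on $Q$. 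If $b^-$ were not a.e.\ zero, fix a Lebesgue point $x_0$ of $b$ that is also a point of density one of $\{b<0\}$, and let $Q$ be a small cube centred at $x_0$. On a subset of $Q$ of measure at least $|Q|/2$ one has $b^-\ge|b(x_0)|/2$; combining this with the standard estimates $\|\chi_S\|_{L^{s(\cdot)}(\rn)}\approx|S|^{1/s(x_0)}$ valid for $s(\cdot)\in\mathcal{P}^{\log}(\rn)$ and small $S$ near $x_0$, and with the boundedness of $[b,\,\mathcal{M}_\alpha]$ applied to $\chi_Q$, we obtain $|b(x_0)|\,|Q|^{\alpha/n+1/q(x_0)}\lesssim|Q|^{1/r(x_0)}$. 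By assumption~(iii) the difference of exponents is $\delta(x_0)/n$, so $|b(x_0)|\lesssim|Q|^{\delta(x_0)/n}\to0$ as $|Q|\to0$, since $\delta_->0$; this contradicts $b(x_0)<0$, so $b\ge0$ a.e.

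\emph{Sufficiency, Step 2: $b\in\mathbb{\widetilde L}(\delta(\cdot))$.} Now that $b\ge0$, for $x\in Q$ the cube $Q$ itself gives $\mathcal{M}_\alpha(b\chi_Q)(x)\ge|Q|^{\alpha/n}b_Q$, which together with $\mathcal{M}_\alpha\chi_Q(x)=|Q|^{\alpha/n}$ yields $[b,\,\mathcal{M}_\alpha]\chi_Q(x)\le|Q|^{\alpha/n}(b(x)-b_Q)$, hence $|[b,\,\mathcal{M}_\alpha]\chi_Q(x)|\ge|Q|^{\alpha/n}(b(x)-b_Q)^-$ for a.e.\ $x\in Q$. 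Since $\int_Q(b-b_Q)=0$ we have $\int_Q|b-b_Q|=2\int_Q(b-b_Q)^-$, so by Hölder's inequality (Property~\ref{p2.2}(i)) and the boundedness hypothesis,
$$|Q|^{\alpha/n}\int_Q|b(x)-b_Q|\,dx\lesssim\|[b,\,\mathcal{M}_\alpha]\chi_Q\|_{L^{q(\cdot)}(\rn)}\|\chi_Q\|_{L^{q'(\cdot)}(\rn)}\lesssim\|\chi_Q\|_{L^{r(\cdot)}(\rn)}\|\chi_Q\|_{L^{q'(\cdot)}(\rn)}.$$
It then remains to prove the scale‑invariant comparison $|Q|^{-\alpha/n}\|\chi_Q\|_{L^{r(\cdot)}(\rn)}\|\chi_Q\|_{L^{q'(\cdot)}(\rn)}\lesssim|Q|^{1/\beta}\|\chi_Q\|_{L^{p'(\cdot)}(\rn)}$ uniformly in $Q$; using $\|\chi_Q\|_{L^{s(\cdot)}(\rn)}\|\chi_Q\|_{L^{s'(\cdot)}(\rn)}\approx|Q|$ this is equivalent to $\|\chi_Q\|_{L^{r(\cdot)}(\rn)}\|\chi_Q\|_{L^{p(\cdot)}(\rn)}\lesssim|Q|^{1/\beta+\alpha/n}\|\chi_Q\|_{L^{q(\cdot)}(\rn)}$, which follows from assumption~(iii) — it forces the associated powers of $|Q|$ to coincide both at every point of $\rn$ and at infinity — together with the log‑Hölder behaviour of $\|\chi_Q\|_{L^{s(\cdot)}(\rn)}$. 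Taking the supremum over $Q$ gives $b\in\mathbb{\widetilde L}(\delta(\cdot))$, and Property~\ref{p2.1} upgrades this to $b\in\mathbb{L}(\delta(\cdot))$, finishing the proof.

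\emph{Main obstacle.} The delicate point is Step~1: passing from a single‑cube norm inequality to the pointwise conclusion $b\ge0$ forces a comparison of $\|\chi_S\|_{L^{q(\cdot)}(\rn)}$ with $\|\chi_Q\|_{L^{q(\cdot)}(\rn)}$ when $S\subset Q$ carries a fixed proportion of the mass of a small cube $Q$, which relies on the refined estimate $\|\chi_S\|_{L^{q(\cdot)}(\rn)}\approx|S|^{1/q(x_0)}$, and it is precisely $\delta_->0$ that makes the resulting power of $|Q|$ vanish. The norm comparison at the end of Step~2, though routine, likewise requires some care, since the asymptotics of $\|\chi_Q\|_{L^{s(\cdot)}(\rn)}$ differ between small and large scales; this is where the regularity built into $\mathcal{P}^{\log}(\rn)$ and the consistency conditions in~(i)--(iii) are used.
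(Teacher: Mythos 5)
Your proof is correct, and while the necessity direction coincides with the paper's (pointwise domination $|[b,\mathcal{M}_\alpha]f|\le\mathcal{M}_{\alpha,b}f$ plus Theorem \ref{t3.4}), your converse genuinely differs. The paper tests on $\chi_Q$, uses $\mathcal{M}_\alpha(\chi_Q)=|Q|^{\alpha/n}$ and $\mathcal{M}_\alpha(b\chi_Q)=\mathcal{M}_{\alpha,Q}(b)$ on $Q$, first derives the $\mathbb{\widetilde L}(\delta(\cdot))$ bound \emph{without} positivity (via $|b_Q|\le|Q|^{-\alpha/n}\mathcal{M}_{\alpha,Q}(b)$, H\"older with $1/q'(\cdot)=1/p'(\cdot)+(1/\beta+\alpha/n-1/r(\cdot))$ and Lemma \ref{l2.4}(ii)), and only then obtains $b^-=0$ by shrinking cubes, using Lemma \ref{l3.4}, the hypothesis $1/\beta+1/(p')_+>1$ and Lebesgue differentiation. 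You reverse the order: you get $b\ge0$ first from the pointwise bound $|[b,\mathcal{M}_\alpha]\chi_Q|\ge|Q|^{\alpha/n}b^-$ on $Q$ together with a Lebesgue-point localization and the local asymptotics $\|\chi_S\|_{L^{s(\cdot)}}\approx|S|^{1/s(x_0)}$, needing only $\delta_->0$ (so your argument never uses $1/\beta+1/(p')_+>1$, which is in any case implied by $\beta<p_-$); then you prove the $\mathbb{\widetilde L}$ estimate using $b\ge0$ and replace the paper's Lemma \ref{l2.4}(ii) by the two-scale estimates $\|\chi_Q\|_{L^{s(\cdot)}}\approx|Q|^{1/s(x)}$ for $|Q|\le1$, $\approx|Q|^{1/s_\infty}$ for $|Q|\ge1$, with the exponents matched pointwise and at infinity by letting $|x|\to\infty$ in (iii). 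Both routes are valid; yours is more self-contained conceptually (no appeal to $\mathcal{M}_{\alpha,Q}(b)$ beyond the lower bound $\mathcal{M}_\alpha(b\chi_Q)\ge|Q|^{\alpha/n}b_Q$) but leans on the standard log-H\"older characterization of $\|\chi_Q\|_{L^{s(\cdot)}}$, which the paper never states (its Lemma \ref{l3.4} is only the easy upper bound for small cubes), so you should cite it (e.g.\ from \cite{cf13}) or prove it; also note that the two-sided comparison $\|\chi_Q\|_{L^{s(\cdot)}}\|\chi_Q\|_{L^{s'(\cdot)}}\approx|Q|$ you invoke needs Lemma \ref{l2.4}(i) for the upper bound and H\"older (Property \ref{p2.2}(i)) for the lower bound, and the directions of these two bounds do line up correctly in your reduction. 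The paper's approach, by contrast, stays entirely within its own Lemma \ref{l2.4}(ii) and avoids the local norm asymptotics.
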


\begin{remark}\label{r3.x1}
There are two remarks for the assumptions of Theorem \ref{t3.3}.
\begin{enumerate}
\item[(i)] The following assumptions are used to guarantee $\mathbb{L}(\delta(\cdot))=\mathbb{\widetilde L}(\delta(\cdot))$:  $p(\cdot)\in \mathcal{P}^{\log}(\mathbb{R}^{n})$, $0<\delta(\cdot)<n$, $1<\beta<p_-$, $p(x)\ge p_\infty$ for a.e. $x\in\rn$ and $\delta(\cdot)/n=1/\beta-1/p(\cdot)$.
\item[(ii)] The following assumptions are used to guarantee variable fractional maximal operator $\mathcal{M}_\delta(\cdot): L^{r(\cdot)}(\rn)\to L^{q(\cdot)}(\rn)$ (see Remark \ref{r2.1} below): $\delta(\cdot)\in (0,n)$ and $r(\cdot)\in\mathcal{P}^{\log}(\rn)$ satisfying $\delta_->0$,
$(r\delta)_+<n$ and $r_\infty\delta_+<n$, and $q(\cdot)$ is defined from $1/q(\cdot)=1/r(\cdot)-\delta(\cdot)/n$.
\end{enumerate}
Similar assumptions are also given in Theorems \ref{t3.4} and \ref{t3.5}.
\end{remark}

By using Theorems \ref{t3.4} and \ref{t3.5} with $\alpha=0$, we may characterizes the variable spaces $\mathbb{L}(\delta(\cdot))$ in terms of the boundedness of operators $\mathcal{M}_b$ and $[b,\, \mathcal{M}]$.

\begin{corollary}\label{t3.1}
Under the assumptions of Theorem \ref{t3.4} with $\alpha=0$ (i.e. $\mathcal{M}_b$:=$\mathcal{M}_{0,\,b}$). Then $b\in \mathbb{L}(\delta(\cdot))$ if and only if $\mathcal{M}_b$: $L^{r(\cdot)}(\rn) \rightarrow L^{q(\cdot)}(\rn)$.
%Let $0< \delta(\cdot) < n$, $ p, r \in\mathcal{P}^{\log}(\mathbb{R}^{n})$ with $p(x)\geq p_{\infty}$ for every $x\in \mathbb{R}^{n}$ and $1<\beta\leq p_{-}$ such that $1/r(x)-1/q(x)=\delta(x)/n=1/\beta-1/p(x)$ with $\sup_{x\in \mathbb{R}^n} r(x)\delta(x)<n$.
%\begin{enumerate}
%\item[(i)] Let $\Omega$ be a bounded open set in $\rn$. If $b\in \mathbb{L}(\delta(\cdot);\Omega)$, then $\mathcal{M}^\Omega_b$: $L^{r(\cdot)}(\Omega)\rightarrow L^{q(\cdot)}(\Omega)$.
%\item[(ii)] If $\mathcal{M}_b$: $L^{r(\cdot)}(\mathbb{R}^n)\rightarrow L^{q(\cdot)}(\mathbb{R}^n)$, then $b\in \mathbb{L}(\delta(\cdot))$.
%\end{enumerate}
\end{corollary}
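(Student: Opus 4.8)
The plan is to derive Corollary~\ref{t3.1} as the $\alpha=0$ specialization of Theorem~\ref{t3.4}. First I would recall from Section~\ref{s1} that, for every $\alpha\in[0,n)$, the fractional maximal commutator is
\begin{equation*}
\mathcal{M}_{\alpha,\,b}(f)(x)=\sup_{Q\ni x}\frac{1}{|Q|^{1-\frac{\alpha}{n}}}\int_Q|b(x)-b(y)||f(y)|\,dy,
\end{equation*}
and that, by convention, $\mathcal{M}_b:=\mathcal{M}_{0,\,b}$. Thus when $\alpha=0$ the operator $\mathcal{M}_{\alpha,\,b}$ is literally $\mathcal{M}_b$, and the boundedness assertion ``$\mathcal{M}_b\colon L^{r(\cdot)}(\rn)\to L^{q(\cdot)}(\rn)$'' is exactly the $\alpha=0$ instance of ``$\mathcal{M}_{\alpha,\,b}\colon L^{r(\cdot)}(\rn)\to L^{q(\cdot)}(\rn)$''.

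Next I would check that substituting $\alpha=0$ into the hypotheses of Theorem~\ref{t3.4} reproduces precisely the standing assumptions of the corollary and does not make any of them degenerate: item~(i) becomes $\delta_->0$, $(r\delta)_+<n$ and $r_\infty\delta_+<n$; item~(ii) is unchanged; and the chain in item~(iii) becomes $1/\beta-1/p(\cdot)=\delta(\cdot)/n=1/r(\cdot)-1/q(\cdot)$. Since Theorem~\ref{t3.4} is stated for all $\alpha\in[0,n)$, the value $\alpha=0$ is admissible and its conclusion applies verbatim, giving the equivalence between $b\in\mathbb{L}(\delta(\cdot))$ and the $L^{r(\cdot)}(\rn)\to L^{q(\cdot)}(\rn)$ boundedness of $\mathcal{M}_b=\mathcal{M}_{0,\,b}$.

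I do not expect any genuine obstacle, since the whole content is contained in Theorem~\ref{t3.4}. The only point deserving a moment's care is that the two structural ingredients underlying that theorem --- the norm equivalence $\mathbb{L}(\delta(\cdot))=\mathbb{\widetilde L}(\delta(\cdot))$ from Property~\ref{p2.1} and the boundedness of the variable fractional maximal operator $\mathcal{M}_{\delta(\cdot)}$ on variable Lebesgue spaces (cf.\ Remark~\ref{r2.1}) --- involve only $\delta(\cdot)$, $\beta$, $p(\cdot)$, $r(\cdot)$ and $q(\cdot)$, and hence remain in force after the substitution $\alpha=0$. Consequently no additional argument is required and the proof reduces to invoking Theorem~\ref{t3.4}.
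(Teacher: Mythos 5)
Your proposal is correct and matches the paper exactly: the corollary is obtained simply by specializing Theorem \ref{t3.4} to $\alpha=0$ and using the convention $\mathcal{M}_b:=\mathcal{M}_{0,\,b}$, which is precisely how the paper derives it. Your extra check that the hypotheses remain non-degenerate at $\alpha=0$ is a reasonable, harmless addition.
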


\begin{corollary}\label{t3.2}
Under the assumptions of Theorem \ref{t3.5} with $\alpha=0$ (i.e. $[b,\,\mathcal{M}]$:=$[b,\,\mathcal{M}_0]$). Then $b\in \mathbb{L}(\delta(\cdot))$ and $b\geq 0$ if and only if $[b,\,\mathcal{M}]$: $L^{r(\cdot)}(\rn) \rightarrow L^{q(\cdot)}(\rn)$.
%Let $0<\delta(\cdot) < n$, $p,\,r \in\mathcal{P}^{\log}(\mathbb{R}^{n})$ with $p(x)\geq p_{\infty}$ for every $x\in \mathbb{R}^{n}$ and $1<\beta\leq p_{-}$ such that $1/r(x)-1/q(x)=\delta(x)/n=1/\beta-1/p(x)$ with $\sup_{x\in \mathbb{R}^n} r(x)\delta(x)<n$.
%\begin{enumerate}
%\item[(i)] Let $\Omega$ be a bounded open set in $\rn$. If $b\in \mathbb{L}(\delta(\cdot);\Omega)$ and $b\geq 0$, then $[b,\,\mathcal{M}_\Omega]$: $L^{r(\cdot)}(\Omega)\rightarrow L^{q(\cdot)}(\Omega)$.
%\item[(ii)] If $[b,\,\mathcal{M}]$: $L^{r(\cdot)}(\mathbb{R}^n)\rightarrow L^{q(\cdot)}(\mathbb{R}^n)$, then
%$b\in \mathbb{L}(\delta(\cdot))$ and $b\geq 0$.
%\end{enumerate}
\end{corollary}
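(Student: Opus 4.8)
\textbf{Proof proposal for Corollary \ref{t3.2}.}
The plan is to obtain this statement as a direct specialization of Theorem \ref{t3.5} to the case $\alpha=0$, so essentially no new argument is needed beyond two elementary identifications and a consistency check on the hypotheses. First I would record that, by the definitions in \eqref{e2.10} and the sentence following it, $\mathcal{M}_0=\mathcal{M}$ is the Hardy--Littlewood maximal function and $[b,\,\mathcal{M}_0]=[b,\,\mathcal{M}]$. Hence the boundedness conclusion ``$[b,\,\mathcal{M}_\alpha]\colon L^{r(\cdot)}(\rn)\to L^{q(\cdot)}(\rn)$'' appearing in Theorem \ref{t3.5} reads, when $\alpha=0$, exactly as ``$[b,\,\mathcal{M}]\colon L^{r(\cdot)}(\rn)\to L^{q(\cdot)}(\rn)$'', which is the operator in the corollary.

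Next I would verify that none of the hypotheses of Theorem \ref{t3.5} degenerates at $\alpha=0$. Setting $\alpha=0$, assumption (i) becomes $\delta_->0$, $(r\delta)_+<n$ and $r_\infty\delta_+<n$ (since $(r(\alpha+\delta))_+=(r\delta)_+$ and $r_\infty(\alpha+\delta(\cdot))_+=r_\infty\delta_+$); assumption (ii) is unchanged, i.e.\ $p(x)\ge p_\infty$ for a.e.\ $x\in\rn$ and $1/\beta+1/(p')_+>1$; and assumption (iii) becomes $1/\beta-1/p(\cdot)=\delta(\cdot)/n=1/r(\cdot)-1/q(\cdot)$. These are precisely the ``$\alpha=0$'' hypotheses intended in the corollary, and they are of the same type as those invoked in Theorem \ref{t3.5}; in particular the two structural consequences noted in Remark \ref{r3.x1} still hold, namely $\mathbb{L}(\delta(\cdot))=\mathbb{\widetilde L}(\delta(\cdot))$ (Property \ref{p2.1}) and the boundedness of the variable fractional maximal operator $\mathcal{M}_{\delta(\cdot)}\colon L^{r(\cdot)}(\rn)\to L^{q(\cdot)}(\rn)$.

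Finally, applying Theorem \ref{t3.5} with $\alpha=0$ and the identification $[b,\,\mathcal{M}_0]=[b,\,\mathcal{M}]$ yields that $b\in\mathbb{L}(\delta(\cdot))$ and $b\ge 0$ if and only if $[b,\,\mathcal{M}]\colon L^{r(\cdot)}(\rn)\to L^{q(\cdot)}(\rn)$, as claimed. I do not expect any genuine obstacle here: the only point worth stating explicitly is the check in the previous paragraph that the parameter conditions involving $\alpha+\delta$ collapse correctly to conditions on $\delta$ alone when $\alpha=0$, so that Theorem \ref{t3.5} is applicable without modification.
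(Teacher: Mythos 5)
Your proposal is correct and matches the paper's approach exactly: the paper obtains Corollary \ref{t3.2} simply by specializing Theorem \ref{t3.5} to $\alpha=0$ and using the identification $[b,\,\mathcal{M}]=[b,\,\mathcal{M}_0]$, which is precisely what you do. Your extra check that the hypotheses involving $\alpha+\delta$ collapse correctly to conditions on $\delta$ is a harmless and welcome verification.
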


%%%%%%%%%%%%%%%%%%%%%%%%%%%%%%%%%%%%%%%%%%%%%%%%%%%%%%%%%%%%%%%%%%%%%%%%%
%%%%%%%%%%%%%%%%%%%%%%%%%%%  2. Main results   %%%%%%%%%%%%%%%%%%%%%%%%%
%%%%%%%%%%%%%%%%%%%%%%%%%%%%%%%%%%%%%%%%%%%%%%%%%%%%%%%%%%%%%%%%%%%%%%%%%

To prove Theorems \ref{t3.3}, \ref{t3.4} and \ref{t3.5}, we need several technical lemmas as follows.

\begin{lemma}\label{l2.4}
\begin{enumerate}
\item[(i)]\cite[Lemma 2.9]{i10} Let $q(\cdot)\in \mathcal{P}^{\log}(\mathbb{R}^n)$. Then there exists a positive constant $C$ such that
$$\frac{1}{|Q|}\|\chi_Q\|_{L^{q(\cdot)}(\mathbb{R}^n)}\|\chi_Q\|_{L^{q^\prime(\cdot)}(\mathbb{R}^n)}\leq C$$
for all cubes $Q$ in $\mathbb{R}^n$.

\item[(ii)] Let $0\leq\gamma<n$ and $p(\cdot),\, q(\cdot)\in \mathcal{P}^{\log}(\rn)$ such that $1/q(\cdot)=1/p(\cdot)-\gamma/n$ with $p_+<n/\gamma$.
Then there exists a positive constant $C$ such that
$$\|\chi_Q\|_{L^{q(\cdot)}(\mathbb{R}^n)}\|\chi_Q\|_{L^{p^\prime(\cdot)}(\mathbb{R}^n)}\leq C|Q|^{1-\frac{\gamma}{n}}$$
for all cubes $Q$ in $\mathbb{R}^n$.
\end{enumerate}
\end{lemma}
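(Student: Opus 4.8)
\textbf{Proof proposal for Lemma \ref{l2.4}(ii).}

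The plan is to deduce part (ii) from part (i) together with the boundedness of the variable fractional maximal operator $\mathcal{M}_\gamma$ (or, equivalently, the norm estimate for characteristic functions that underlies it). First I would recall the standard pointwise/norm identity: for any cube $Q$ and any $x\in\rn$, testing $\mathcal{M}_\gamma(\chi_Q)$ on points of $Q$ gives $\mathcal{M}_\gamma(\chi_Q)(x)\gtrsim |Q|^{\gamma/n}\chi_Q(x)$, so that by the $L^{p(\cdot)}\to L^{q(\cdot)}$ boundedness of $\mathcal{M}_\gamma$ under the stated hypotheses $0\le\gamma<n$, $p(\cdot),q(\cdot)\in\mathcal{P}^{\log}(\rn)$, $1/q(\cdot)=1/p(\cdot)-\gamma/n$, $p_+<n/\gamma$ (cf. Remark \ref{r2.1}), one obtains
\begin{align*}
|Q|^{\frac{\gamma}{n}}\|\chi_Q\|_{L^{q(\cdot)}(\rn)}\lesssim \|\mathcal{M}_\gamma(\chi_Q)\|_{L^{q(\cdot)}(\rn)}\lesssim \|\chi_Q\|_{L^{p(\cdot)}(\rn)}.
\end{align*}
Hence $\|\chi_Q\|_{L^{q(\cdot)}(\rn)}\lesssim |Q|^{-\gamma/n}\|\chi_Q\|_{L^{p(\cdot)}(\rn)}$ for all cubes $Q$.

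Next I would multiply this inequality by $\|\chi_Q\|_{L^{p'(\cdot)}(\rn)}$ and invoke part (i) of this lemma applied with the exponent $p(\cdot)$ (note $p(\cdot)\in\mathcal{P}^{\log}(\rn)$ is exactly what part (i) needs): $\|\chi_Q\|_{L^{p(\cdot)}(\rn)}\|\chi_Q\|_{L^{p'(\cdot)}(\rn)}\lesssim |Q|$. Combining,
\begin{align*}
\|\chi_Q\|_{L^{q(\cdot)}(\rn)}\|\chi_Q\|_{L^{p'(\cdot)}(\rn)}\lesssim |Q|^{-\frac{\gamma}{n}}\|\chi_Q\|_{L^{p(\cdot)}(\rn)}\|\chi_Q\|_{L^{p'(\cdot)}(\rn)}\lesssim |Q|^{1-\frac{\gamma}{n}},
\end{align*}
which is the claimed bound, with a constant $C$ independent of $Q$. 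When $\gamma=0$ the statement is just part (i), so we may assume $\gamma\in(0,n)$ and the hypothesis $p_+<n/\gamma$ makes the fractional maximal bound available.

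The main obstacle is justifying the boundedness $\mathcal{M}_\gamma:L^{p(\cdot)}(\rn)\to L^{q(\cdot)}(\rn)$ cleanly, i.e. confirming that the hypotheses $p_+<n/\gamma$ and $p(\cdot),q(\cdot)\in\mathcal{P}^{\log}(\rn)$ with $1/q(\cdot)=1/p(\cdot)-\gamma/n$ put us in the range covered by the known results on variable fractional maximal operators (the references \cite{ks03,ms12} cited in Remark \ref{r2.1}); a minor point is that those references are typically phrased for the fractional maximal operator defined via balls or via the "$\sup$ over cubes" normalization, and one should check these agree up to dimensional constants. Alternatively, if one prefers to avoid the operator entirely, the characteristic-function estimate $\|\chi_Q\|_{L^{q(\cdot)}(\rn)}\lesssim|Q|^{-\gamma/n}\|\chi_Q\|_{L^{p(\cdot)}(\rn)}$ can be proved directly from the $\log$-Hölder continuity of $1/p(\cdot)-1/q(\cdot)=\gamma/n$ (a constant here, so in fact $q(\cdot)$ is a fixed "shift" of $p(\cdot)$) by the standard local/global splitting used to estimate $\|\chi_Q\|_{L^{s(\cdot)}(\rn)}$; either route is routine, and the rest of the argument is purely formal.
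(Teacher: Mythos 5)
Your proposal is correct, but it takes a genuinely different route from the paper. The paper deduces (ii) from (i) by a purely formal rescaling: since $1/q(\cdot)+1/p'(\cdot)=1-\gamma/n$, the exponents $(1-\tfrac{\gamma}{n})q(\cdot)$ and $(1-\tfrac{\gamma}{n})p'(\cdot)$ are mutually conjugate (and remain in $\mathcal{P}^{\log}(\rn)$ because $p_->1$ and $p_+<n/\gamma$), so (i) applied to this pair gives $\|\chi_Q\|_{L^{(1-\gamma/n)q(\cdot)}(\rn)}\|\chi_Q\|_{L^{(1-\gamma/n)p'(\cdot)}(\rn)}\le C|Q|$, and then the power rule $\||f|^{s}\|_{L^{p(\cdot)}(\rn)}=\|f\|^{s}_{L^{sp(\cdot)}(\rn)}$ (Property \ref{p2.2}(iii)) with $s=\tfrac{n}{n-\gamma}$ converts this into the stated bound; no operator boundedness is used, and the case $\gamma=0$ needs no separate treatment. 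You instead obtain $\|\chi_Q\|_{L^{q(\cdot)}(\rn)}\lesssim |Q|^{-\gamma/n}\|\chi_Q\|_{L^{p(\cdot)}(\rn)}$ by testing the $L^{p(\cdot)}\to L^{q(\cdot)}$ boundedness of $\mathcal{M}_\gamma$ on $\chi_Q$ (using $\mathcal{M}_\gamma(\chi_Q)\ge |Q|^{\gamma/n}\chi_Q$) and then invoke (i) for $p(\cdot)$; this is sound, and the boundedness you need is indeed available from the paper's own Lemma \ref{l2.9}/Remark \ref{r2.1} with $\delta(\cdot)\equiv\gamma$, since $(p\gamma)_+<n$ is exactly $p_+<n/\gamma$ and $p_\infty\gamma<n$ follows from $p_\infty\le p_+$; you also rightly dispose of $\gamma=0$ by reduction to (i). The trade-off: the paper's argument is more elementary and self-contained (only (i) and the norm power rule), while yours leans on the heavier Sobolev-type theorem already quoted elsewhere in the paper but is conceptually transparent, being the necessity of that embedding tested on characteristic functions; your suggested alternative via the standard $\|\chi_Q\|_{L^{s(\cdot)}}$ estimates under log-H\"older continuity would also work.
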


\begin{proof}[Proof of Lemma \ref{l2.4}(ii)]
From (i) and $\frac{1}{(1-\frac{\gamma}{n})q(x)}+\frac{1}{(1-\frac{\gamma}{n})p^\prime(x)}=1$, we deduce that
$$\frac{1}{|Q|}\|\chi_Q\|_{L^{(1-\frac{\gamma}{n})q(\cdot)}(\mathbb{R}^n)}\|\chi_Q\|_{L^{(1-\frac{\gamma}{n})p^\prime(\cdot)}(\mathbb{R}^n)}\leq C.
$$
By this and Property \ref{p2.2}(iii), we obtain
\begin{align*}
&\quad \ \|\chi^{\frac{n}{n-\gamma}}_Q\|_{L^{(1-\frac{\gamma}{n})q(\cdot)}(\mathbb{R}^n)}
\|\chi^{\frac{n}{n-\gamma}}_Q\|_{L^{(1-\frac{\gamma}{n})p^\prime(\cdot)}(\mathbb{R}^n)}
\leq C|Q|\\
&\Longleftrightarrow
\|\chi_Q\|^{\frac{n}{n-\gamma}}_{L^{(1-\frac{\gamma}{n})q(\cdot)\frac{n}{n-\gamma}}(\mathbb{R}^n)}
\|\chi_Q\|^{\frac{n}{n-\gamma}}_{L^{(1-\frac{\gamma}{n})p^\prime(\cdot)\frac{n}{n-\gamma}}(\mathbb{R}^n)}\leq C|Q|\\
&\Longleftrightarrow\|\chi_Q\|^{\frac{n}{n-\gamma}}_{L^{q(\cdot)}(\mathbb{R}^n)}\|\chi_Q\|^{\frac{n}{n-\gamma}}_{L^{p^\prime(\cdot)}(\mathbb{R}^n)}\leq C|Q|\\
&\Longleftrightarrow\|\chi_Q\|_{L^{q(\cdot)}(\mathbb{R}^n)}\|\chi_Q\|_{L^{p^\prime(\cdot)}(\mathbb{R}^n)}\leq C|Q|^{1-\frac{\gamma}{n}}.
\end{align*}
The proof of Lemma \ref{l2.4}(ii) is completed.
\end{proof}
Next, we recall some notion and lemmas on variable fractional maximal function (see, for example, \cite{ms12, ks03}). Let $0<\delta(\cdot)<n$. The {\it potential operator} $I_{\delta(\cdot)}$ of  locally integrable function $f$ is defined by
$$I_{\delta(x)}f(x):=\int_\rn \frac{f(y)}{|x-y|^{n-\delta(x)}}\,dy.$$
The {\it variable fractional maximal operator} $\mathcal{M}_{\delta(\cdot)}$ of locally integrable function $f$ is defined by
\begin{align}\label{e2.13}
\mathcal{M}_{\delta(x)}(f)(x) := \sup_{Q\ni x}\frac{1}{|Q|^{1-\frac{\delta(x)}{n}}}\int_Q|f(y)|\,dy,
\end{align}
where the supremum is taken over all cubes $Q\subset\rn$ containing $x$.

The following lemma shows the boundedness of potential operators $I_{\delta(\cdot)}$ on variable Lebesgue spaces, which comes from \cite[Theorem 1.2]{ms12} with the measurable function space $\mathcal{L}_{p,\,q,\,\beta}(\rn)$ in it choosing a special case $\mathcal{L}_{p(\cdot),\,q,\,\beta}(\rn)=\mathcal{L}_{p(\cdot),\,0,\,0}(\rn)=L^{p(\cdot)}(\rn)$.

\begin{lemma}\label{l2.9}
Let $\delta(\cdot)\in (0,n)$ satisfying $\delta_->0$ and let $r(\cdot)\in\mathcal{P}^{\log}(\rn)$ satisfying
$(r\delta)_+<n$ and $r_\infty\delta_+<n$.
Define $q(\cdot)$ as $1/q(\cdot)=1/r(\cdot)-\delta(\cdot)/n$.
Then the potential operator $I_{\delta(\cdot)}$ is bounded from $L^{r(\cdot)}(\rn)$ into $L^{q(\cdot)}(\rn)$.
\end{lemma}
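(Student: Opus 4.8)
The plan is to obtain Lemma \ref{l2.9} as the special case of \cite[Theorem 1.2]{ms12} in which the two Morrey-type parameters $q$ and $\beta$ of the ambient space $\mathcal{L}_{p(\cdot),\,q,\,\beta}(\rn)$ are both set to $0$. First I would recall from \cite{ms12} the precise definition of $\mathcal{L}_{p(\cdot),\,q,\,\beta}(\rn)$ and check that, with $q=0$ and $\beta=0$, its defining functional collapses to the Luxemburg--Nakano norm, so that $\mathcal{L}_{p(\cdot),\,0,\,0}(\rn)=L^{p(\cdot)}(\rn)$ with equal (or at worst equivalent) norms, and likewise on the target side. It then remains only to match hypotheses and indices: the global log-H\"older continuity of the exponent required by \cite[Theorem 1.2]{ms12} is exactly $r(\cdot)\in\mathcal{P}^{\log}(\rn)$; the requirement that the fractional order be bounded away from $0$ is $\delta_->0$; the local and ``at infinity'' smallness constraints on the product $r(\cdot)\delta(\cdot)$ are precisely $(r\delta)_+<n$ and $r_\infty\delta_+<n$; and the target exponent produced there by the Sobolev relation $1/q(\cdot)=1/r(\cdot)-\delta(\cdot)/n$ is the defining relation for $q(\cdot)$ in the lemma. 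One checks in passing that these constraints also force $q(\cdot)\in\mathcal{P}^{\log}(\rn)$, so $L^{q(\cdot)}(\rn)$ is a well-defined Banach function space. Applying \cite[Theorem 1.2]{ms12} then yields $\|I_{\delta(\cdot)}f\|_{L^{q(\cdot)}(\rn)}\le C\|f\|_{L^{r(\cdot)}(\rn)}$, which is the assertion.

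The step I expect to require the most care is verifying that \cite[Theorem 1.2]{ms12} genuinely remains valid in this degenerate range of its auxiliary parameters, that is, that neither its statement nor its proof tacitly presupposes $q>0$ or $\beta>0$; this should be confirmed directly against the hypotheses of that theorem rather than taken for granted.

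Should a self-contained argument be preferred, I would instead establish a variable-exponent Hedberg inequality and combine it with the boundedness of $\mathcal{M}$ (Property \ref{p2.11}). For fixed $x$, split $I_{\delta(x)}f(x)$ into the integrals over $|x-y|<t$ and over $|x-y|\ge t$. The near part is controlled by a geometric series $\sum_{k\ge0}(2^{-k}t)^{\delta(x)}\mathcal{M}f(x)\lesssim t^{\delta(x)}\mathcal{M}f(x)$, where the convergence uses $\delta_->0$; the far part is handled via H\"older's inequality in $L^{r(\cdot)}(\rn)$ (Property \ref{p2.2}(i)) together with a tail bound $\bigl\|\,|x-\cdot|^{-(n-\delta(x))}\chi_{\{|x-\cdot|\ge t\}}\bigr\|_{L^{r'(\cdot)}(\rn)}\lesssim t^{\delta(x)-n/r(x)}$, which is legitimate because $(r\delta)_+<n$ and $r(\cdot)$ is log-H\"older continuous. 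Optimizing in $t$ and normalizing so that $\|f\|_{L^{r(\cdot)}(\rn)}=1$ yields the pointwise estimate $|I_{\delta(x)}f(x)|\lesssim\bigl(\mathcal{M}f(x)\bigr)^{1-\delta(x)r(x)/n}=\bigl(\mathcal{M}f(x)\bigr)^{r(x)/q(x)}$; raising both sides to the $q(x)$-th power and integrating gives $\varrho_{q(\cdot)}(I_{\delta(\cdot)}f)\lesssim\varrho_{r(\cdot)}(\mathcal{M}f)$, which is bounded by Property \ref{p2.11} together with the usual passage between norm and modular, whence $\|I_{\delta(\cdot)}f\|_{L^{q(\cdot)}(\rn)}\lesssim 1$. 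In this route the delicate point is the variable-exponent tail estimate for the Riesz kernel, which is exactly where the structural hypotheses on $r(\cdot)$ and $\delta(\cdot)$ enter.
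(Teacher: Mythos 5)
Your primary argument coincides with the paper's own treatment: Lemma \ref{l2.9} is obtained there precisely by specializing \cite[Theorem 1.2]{ms12} to the degenerate case $\mathcal{L}_{p(\cdot),\,0,\,0}(\rn)=L^{p(\cdot)}(\rn)$ and matching the hypotheses ($r(\cdot)\in\mathcal{P}^{\log}(\rn)$, $\delta_->0$, $(r\delta)_+<n$, $r_\infty\delta_+<n$, and the Sobolev relation defining $q(\cdot)$) exactly as you describe, so your proposal is correct. The self-contained Hedberg-type alternative you sketch goes beyond what the paper does but is not needed for the comparison.
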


\begin{remark}
Under the assumptions of Lemma \ref{l2.9} and from the pointwise estimate $$(\mathcal{M}_{\delta(x)}f)(x)\leq C(I_{\delta(x)}|f|)(x)$$ (see \label{r2.1}\cite[P.\,909]{ks03}), where the constant $C$ does not depend on $f$, we deduce that the maximal operator $\mathcal{M}_{\delta(\cdot)}$ is bounded from $L^{r(\cdot)}(\rn)$ into $L^{q(\cdot)}(\rn)$.
\end{remark}

%For a function $b$ defined on $\mathbb{R}^n$, we denote by
%\begin{equation}\label{e2.011}
%b^-(x) :=\left\{\begin{array}{ll}
%0 & \qquad b(x)\geq 0\\
%|b(x)| &\qquad  b(x) < 0\\
%\end{array} \right.
%\end{equation}
%and $b^+(x):=|b(x)|-b^-(x)$. Obviously, $b^+(x)-b^-(x)=b(x)$.

\begin{lemma}\label{l3.4}
Let $p(\cdot)\in \mathcal{P}^{\log}(\rn)$  and $Q\subset\rn$ is a cube satisfying $|Q|\le C_0$ where $C_0>0$ is a sufficient small constant depends on $p_+$. Then $\|\chi_Q\|_{L^{p(\cdot)}(\rn)}\le |Q|^{1/p_+}$.
\end{lemma}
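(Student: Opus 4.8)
The plan is to argue directly from the definition of the Luxemburg--Nakano norm. Recall that
$$\|\chi_Q\|_{L^{p(\cdot)}(\rn)}=\inf\lf\{\lambda\in(0,\fz):\ \int_{\rn}\lf(\frac{\chi_Q(x)}{\lambda}\r)^{p(x)}\,dx\le1\r\}=\inf\lf\{\lambda\in(0,\fz):\ \int_Q\lambda^{-p(x)}\,dx\le1\r\}.$$
Thus it suffices to exhibit one admissible $\lambda$ that is no larger than $|Q|^{1/p_+}$; the natural candidate is $\lambda_0:=|Q|^{1/p_+}$ itself, so the whole argument reduces to verifying the modular inequality $\int_Q\lambda_0^{-p(x)}\,dx\le1$, that is, $\int_Q|Q|^{-p(x)/p_+}\,dx\le1$.

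To check this, I would use the smallness hypothesis in the mild form $C_0\le1$, so that $|Q|\le C_0\le1$ and hence $|Q|^{-1}\ge1$. Since $0<p(x)\le p_+$ for every $x$, we have $0<p(x)/p_+\le1$, and because the base $|Q|^{-1}$ is at least $1$, the map $t\mapsto(|Q|^{-1})^{t}$ is nondecreasing on $[0,1]$; therefore $|Q|^{-p(x)/p_+}=(|Q|^{-1})^{p(x)/p_+}\le(|Q|^{-1})^{1}=|Q|^{-1}$ for a.e. $x\in Q$. Integrating this pointwise bound over $Q$ gives
$$\int_Q|Q|^{-p(x)/p_+}\,dx\le|Q|^{-1}\int_Q\,dx=|Q|^{-1}|Q|=1.$$
Consequently $\lambda_0$ belongs to the set whose infimum defines $\|\chi_Q\|_{L^{p(\cdot)}(\rn)}$, and therefore $\|\chi_Q\|_{L^{p(\cdot)}(\rn)}\le\lambda_0=|Q|^{1/p_+}$, as claimed.

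There is essentially no obstacle here: the only point worth noticing is that one should test the modular against $\lambda_0=|Q|^{1/p_+}$ rather than attempt to evaluate the norm exactly, after which the bound is a one-line consequence of $p(\cdot)\le p_+$ and $|Q|\le1$. In fact neither the log-H\"older continuity conditions \eqref{e2.1}--\eqref{e2.2} nor any lower bound on $p_-$ are used; what matters is only $p_+<\infty$ (guaranteed by $p(\cdot)\in\mathcal{P}(\rn)$) and the smallness $|Q|\le1$. Retaining the stronger hypothesis $p(\cdot)\in\mathcal{P}^{\log}(\rn)$ in the statement is harmless for consistency with the neighbouring lemmas, but the proof does not exploit it.
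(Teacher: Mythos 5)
Your proof is correct and follows essentially the same route as the paper: test the Luxemburg norm at $\lambda=|Q|^{1/p_+}$ and bound the modular using $p(x)\le p_+$ together with $|Q|\le 1$, which is exactly the comparison $\int_{\rn}(\chi_Q/\lambda)^{p(x)}\,dx\le\int_{\rn}(\chi_Q/\lambda)^{p_+}\,dx=|Q|/\lambda^{p_+}=1$ made in the paper. Your observation that only $p_+<\infty$ and the smallness of $|Q|$ (not the log-H\"older condition) are needed is also accurate.
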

\begin{proof}
For any $\lambda\in(0,\,1)$, if the side length of $Q$ is sufficient small, we have
\begin{align*}
1\ge \frac{|Q|^{p_+}}{\lambda}=\int_\rn \lf(\frac{\chi_Q}{\lambda}\r)^{p_+}dx\ge\int_\rn \lf(\frac{\chi_Q}{\lambda}\r)^{p(x)}dx.
\end{align*}
Then by the definition of $\|\cdot\|_{L^{p(\cdot)}(\rn)}$ and pick $\lambda=|Q|^{1/p_+}$, we obtain $$\|\chi_Q\|_{L^{p(\cdot)}(\rn)}\le |Q|^{1/p_+},$$ which completes the proof of Lemma \ref{l3.4}.
\end{proof}

\begin{proof}[Proof of Theorem \ref{t3.3}]
(i) For any $b\in \mathbb{L}(\delta(\cdot))$ and $b\geq 0$, let us show
\begin{equation}\label{e3.x4}
[b,\,\mathcal{M}^\sharp]: L^{r(\cdot)}(\rn)\rightarrow L^{q(\cdot)}(\rn)
\end{equation}
holds true. Let $f\in L^{r(\cdot)}(\rn)$. From Definition \ref{d2.4}(i) and \eqref{e2.13}, we deduce that, for any $x\in\rn$,
\begin{align*}
&|[b,\,\mathcal{M}^\sharp]f(x)|\\
&=\lf|\sup_{Q\ni x}\frac{b(x)}{|Q|}\int_Q|f(y)-f_Q|dy-\sup_{Q\ni x}\frac{1}{|Q|}\int_Q|b(y)f(y)-(bf)_Q|\,dy\r|\\\nonumber
&\leq \sup_{Q\ni x}{\frac{1}{|Q|}\int_Q|b(y)-b(x)||f(y)|+|b(x)f_Q-(bf)_Q|\,dy}\\\nonumber
&\leq \|b\|_{\mathbb{L}(\delta(\cdot))}\sup_{Q\ni x}\frac{1}{|Q|}\int_Q|y-x|^{\delta(x)}|f(y)|\,dy\\ \nonumber
&\quad+\sup_{Q\ni x}\lf|\frac{b(x)}{|Q|}\int_Qf(z)dz-\frac{1}{|Q|}\int_Qb(z)f(z)\,dz\r|\\\nonumber
&\leq \|b\|_{\mathbb{L}(\delta(\cdot))}\sup_{Q\ni x}\frac{1}{|Q|}\int_Q|y-x|^{\delta(x)}|f(y)|\,dy+\sup_{Q\ni x}\frac{1}{|Q|}\int_Q|b(x)-b(z)||f(z)|\,dz\\\nonumber
&\leq \|b\|_{\mathbb{L}(\delta(\cdot))}\sup_{Q\ni x}\frac{1}{|Q|}\int_Q|y-x|^{\delta(x)}|f(y)|\,dy+\|b\|_{\mathbb{L}(\delta(\cdot))}\sup_{Q\ni x}\frac{1}{|Q|}\int_Q|x-z|^{\delta(x)}|f(z)|\,dz\\\nonumber
&\leq C\|b\|_{\mathbb{L}(\delta(\cdot))}\sup_{Q\ni x}\frac{1}{|Q|^{1-\frac{\delta(x)}{n}}}\int_Q|f(y)|\,dy\\\nonumber
&\leq C\|b\|_{\mathbb{L}(\delta(\cdot))}\mathcal{M}_{\delta(x)}f(x).\nonumber
\end{align*}
Then, by this and $\mathcal{M}_{\delta(\cdot)}: L^{r(\cdot)}(\rn)\to L^{q(\cdot)}(\rn)$ (see Remark \ref{r2.1}) with the assumptions (i) and (iii) of Theorem \ref{t3.3}, we obtain
$$\|[b,\,\mathcal{M}^\sharp](f)\|_{L^{q(\cdot)}(\rn)}\leq C||b||_{\mathbb{L}(\delta(\cdot))}\|f\|_{L^{r(\cdot)}(\rn)}$$
and hence \eqref{e3.x4} holds true.

(ii) Conversely, suppose that $[b,\,\mathcal{M}^\sharp]$: $L^{r(\cdot)}(\mathbb{R}^n)\rightarrow L^{q(\cdot)}(\mathbb{R}^n)$, let us prove \begin{equation}\label{e3.x5}
b\in \mathbb{L}(\delta(\cdot))\quad {\rm and}\quad b\ge 0.
\end{equation}
For any fixed cube $Q$, repeating the proof of \cite[P.\,3333]{bmr}, we have
\begin{align*}
\mathcal{M}^\sharp(\chi_Q)(x)=\frac{1}{2} \quad {\rm for} \ {\rm all}\quad x\in Q.
\end{align*}
By this and \eqref{e2.08}, we obtain
\begin{align*}
\lf\|\lf(b-2\mathcal{M}^\sharp(b\chi_Q)\r)\chi_Q\r\|_{L^{q(\cdot)}(\mathbb{R}^n)}
&=\lf\|2\lf(\frac{1}{2}b-\mathcal{M}^\sharp(b\chi_Q)\r)\chi_Q\r\|_{L^{q(\cdot)}(\mathbb{R}^n)}\\
&=\lf\|2\lf(b\mathcal{M}^\sharp(\chi_Q)-\mathcal{M}^\sharp(b\chi_Q)\r)\chi_Q\r\|_{L^{q(\cdot)}(\mathbb{R}^n)}\\
&= 2\|[b,\,\mathcal{M}^\sharp](\chi_Q)\|_{L^{q(\cdot)}(\mathbb{R}^n)}\\
&\leq C\|\chi_Q\|_{L^{r(\cdot)}(\mathbb{R}^n)},
\end{align*}
which implies that
\begin{equation}\label{e3.9}
\frac{\lf\|\lf(b-2\mathcal{M}^\sharp(b\chi_Q)\r)\chi_Q\r\|_{L^{q(\cdot)}(\mathbb{R}^n)}}{\|\chi_Q\|_{L^{r(\cdot)}(\mathbb{R}^n)}}
\leq C.
\end{equation}

For any fixed cube $Q\subset \mathbb{R}^n$, we have
\begin{equation}\label{e3.10}
|b_Q|\leq 2\mathcal{M}^\sharp(b\chi_Q)(x) \quad {\rm for\ any} \quad x\in Q. \quad \mathrm{(see\ \cite[(2)]{bmr})}
\end{equation}
Let $E:=\{x\in Q:b(x)\leq b_Q\}$. Notice that $b_Q$ is the mean value of $\int_Q b(y)\,dy$, we obtain
\begin{align}\label{e4.0}
\int_E|b(x)-b_Q|\,dx=\int_{Q\setminus E}|b(x)-b_Q|\,dx.
\end{align}
Moreover, for any $x\in E$, we have $b(x)\leq b_Q\leq |b_Q|\leq 2\mathcal{M}^\sharp(b\chi_Q)(x)$. Then
\begin{align}\label{e4.1}
|b(x)-b_Q|\leq |b(x)-2\mathcal{M}^\sharp(b\chi_Q)(x)| \quad {\rm for\ any} \quad x\in E.
\end{align}
Since $1/q^\prime(\cdot)=1/p^\prime(\cdot)+[1/\beta-1/r(\cdot)]$, from \eqref{e4.0}, \eqref{e4.1}, (i) and (ii) of Property \ref{p2.2}, and \eqref{e3.9}, we conclude that
\begin{align}\label{e3.x1}
&\frac{1}{|Q|^{\frac{1}{\beta}}\|\chi_Q\|_{L^{p^\prime(\cdot)}(\mathbb{R}^n)}}\int_Q|b(x)-b_Q|\,dx \nonumber \\\nonumber
&\quad=\frac{2}{|Q|^{\frac{1}{\beta}}\|\chi_Q\|_{L^{p^\prime(\cdot)}(\mathbb{R}^n)}}\int_E|b(x)-b_Q|\,dx\\
&\quad\leq \frac{2}{|Q|^{\frac{1}{\beta}}\|\chi_Q\|_{L^{p^\prime(\cdot)}(\mathbb{R}^n)}}
\int_Q|b(x)-2\mathcal{M}^\sharp(b\chi_Q)(x)|\,dx\\\nonumber
&\quad\leq \frac{C}{|Q|^{\frac{1}{\beta}}\|\chi_Q\|_{L^{p^\prime(\cdot)}(\mathbb{R}^n)}}
\lf\|\lf(b-2\mathcal{M}^\sharp(b\chi_Q)\r)\chi_Q\r\|_{L^{q(\cdot)}(\mathbb{R}^n)}
\|\chi_Q\|_{L^{q^\prime(\cdot)}(\mathbb{R}^n)}\\\nonumber
&\quad\leq \frac{C}{|Q|^{\frac{1}{\beta}}\|\chi_Q\|_{L^{p^\prime(\cdot)}(\mathbb{R}^n)}}
\lf\|\lf(b-2\mathcal{M}^\sharp(b\chi_Q)\r)\chi_Q\r\|_{L^{q(\cdot)}(\mathbb{R}^n)}
\|\chi_Q\|_{L^{p^\prime(\cdot)}(\mathbb{R}^n)}
\|\chi_Q\|_{L^{(\frac{1}{\beta}-\frac{1}{r(\cdot)})^{-1}}(\mathbb{R}^n)}\\\nonumber
&\quad=\frac{C}{|Q|^{\frac{1}{\beta}}}\frac{\lf\|\lf(b-2\mathcal{M}^\sharp(b\chi_Q)\r)\chi_Q\r\|_{L^{q(\cdot)}(\mathbb{R}^n)}}
{\|\chi_Q\|_{L^{r(\cdot)}(\mathbb{R}^n)}}
\|\chi_Q\|_{L^{r(\cdot)}(\mathbb{R}^n)}\|\chi_Q\|_{L^{(\frac{1}{\beta}-\frac{1}{r(\cdot)})^{-1}}(\mathbb{R}^n)}\\\nonumber
&\quad\le\frac{C}{|Q|^{\frac{1}{\beta}}}
\|\chi_Q\|_{L^{r(\cdot)}(\mathbb{R}^n)}\|\chi_Q\|_{L^{(\frac{1}{\beta}-\frac{1}{r(\cdot)})^{-1}}(\mathbb{R}^n)}.
\end{align}
Let $h^\prime(\cdot):=(1/\beta-1/r(\cdot))^{-1}$. Then we obtain
$$\frac{1}{r(\cdot)}+\frac{1}{h^\prime(\cdot)}=1-\lf(1-\frac{1}{\beta}\r)\quad
{\rm and}\quad h(\cdot)=\frac{1}{1-\frac{1}{h^\prime(\cdot)}}=\frac{1}{1-\frac{1}{\beta}+\frac{1}{r(\cdot)}},$$
which implies that $$h_+=\frac{1}{1-\frac{1}{\beta}+\frac{1}{r_+}}<\frac{1}{1-\frac{1}{\beta}}.$$
Therefore, by Lemma \ref{l2.4}(ii), we have
$$\|\chi_Q\|_{L^{r(\cdot)}(\mathbb{R}^n)}\|\chi_Q\|_{L^{(\frac{1}{\beta}-\frac{1}{r(\cdot)})^{-1}}(\mathbb{R}^n)}\leq C|Q|^{\frac{1}{\beta}}.$$
By this and \eqref{e3.x1}, we obtain
$$\frac{1}{|Q|^{\frac{1}{\beta}}\|\chi_Q\|_{L^{p^\prime(\cdot)}(\mathbb{R}^n)}}\int_Q|b(x)-b_Q|\,dx\leq C.$$
This shows that $b\in\mathbb{\widetilde L}(\delta(\cdot))$ and
hence by Property \ref{p2.1} with the assumptions (ii) and (iii) of Theorem \ref{t3.3}, $b\in\mathbb{L}(\delta(\cdot))$.

Next, let us prove $b\geq 0$. It also suffices to show $b^-=0$, where $b^-:=-\min\{b,0\}$ and $b^+:=|b|-b^-$. For any $x\in Q$, by \eqref{e3.10}, we have
$$2\mathcal{M}^\sharp(b\chi_Q)(x)-b(x)\geq |b_Q|-b(x)=|b_Q|-b^+(x)+b^-(x).$$
From this, we deduce that
\begin{align}\label{e3.11}
\frac{1}{|Q|}\int_Q|2\mathcal{M}^\sharp(b\chi_Q)(x)-b(x)|\,dx
&\geq \frac{1}{|Q|}\int_Q(|b_Q|-b^+(x)+b^-(x))\,dx\\\nonumber
&=|b_Q|-\frac{1}{|Q|}\int_Qb^+(x)dx+\frac{1}{|Q|}\int_Qb^-(x)\,dx.\nonumber
\end{align}
On the other hand, by \eqref{e3.x1}, we have
\begin{align*}
\frac{1}{|Q|^{\frac{1}{\beta}+1}\|\chi_Q\|_{L^{p^\prime(\cdot)}(\mathbb{R}^n)}}\int_Q|2\mathcal{M}^\sharp(b\chi_Q)(x)-b(x)|\,dx
%&\quad\leq \frac{C}{|Q|^{\frac{1}{\beta}+1}\|\chi_Q\|_{L^{p^\prime(\cdot)}(\mathbb{R}^n)}}\lf\|\lf(2\mathcal{M}^\sharp(b\chi_Q)-b\r)\chi_Q\r\|_{L^{q(\cdot)}(\mathbb{R}^n)}
%\|\chi_Q\|_{L^{q^\prime(\cdot)}(\mathbb{R}^n)}\\
%&\quad\leq \frac{C}{|Q|^{\frac{1}{\beta}+1}\|\chi_Q\|_{L^{p^\prime(\cdot)}(\mathbb{R}^n)}}
%\lf\|\lf(2\mathcal{M}^\sharp(b\chi_Q)-b\r)\chi_Q\r\|_{L^{q(\cdot)}(\mathbb{R}^n)} \|\chi_Q\|_{L^{p^\prime(\cdot)}(\mathbb{R}^n)}\|\chi_Q\|_{L^{(\frac{1}{\beta}-\frac{1}{r(\cdot)})^{-1}}
%(\mathbb{R}^n)}\\
%&\quad\leq  \frac{C}{|Q|^{\frac{1}{\beta}+1}}\frac{\lf\|\lf(2\mathcal{M}^\sharp(b\chi_Q)-b\r)\chi_Q\r\|_{L^{q(\cdot)}(\mathbb{R}^n)}}
%{\|\chi_Q\|_{L^{r(\cdot)}(\mathbb{R}^n)}}
%\|\chi_Q\|_{L^{r(\cdot)}(\mathbb{R}^n)}\|\chi_Q\|_{L^{(\frac{1}{\beta}-\frac{1}{r(\cdot)})^{-1}}
%(\mathbb{R}^n)}
\le C|Q|^{-1}.
\end{align*}
This, together with \eqref{e3.11}, gives
\begin{align}\label{e3.12}
\frac{1}{|Q|^{\frac{1}{\beta}}\|\chi_Q\|_{L^{p^\prime(\cdot)}(\mathbb{R}^n)}}\lf(|b_Q|-\frac{1}{|Q|}\int_Qb^+(x)dx+\frac{1}{|Q|}\int_Qb^-(x)\,dx\r)
\leq C|Q|^{-1}.
\end{align}

By letting the side length of $Q$ sufficient small and using \eqref{e3.12} and Lemma \ref{l3.4}, we obtain
$$|b_Q|-\frac{1}{|Q|}\int_Qb^+(x)dx+\frac{1}{|Q|}\int_Qb^-(x)\,dx\leq C|Q|^{\frac{1}{\beta}+\frac{1}{(p^\prime)_+}-1}.$$
Let the side length of $Q$ tends to 0 (then $|Q|\rightarrow 0$) with $Q\ni x$, Legesgue's differentation theorem assures that the limit of the left-hand side of \eqref{e3.12} equals to
$$|b(x)|-b^+(x)+b^-(x)=2b^-(x)=2|b^-(x)|.$$
And the right-hand side of tends to 0 due to $1/\beta+1/(p')_+>1$. So, we have $b^-=0$ and hence \eqref{e3.x5} holds true. We finish the proof of Theorem \ref{t3.3}.
\end{proof}

\begin{proof}[Proof of Theorem \ref{t3.4}]
%To prove Theorem \ref{t3.4}, we borrow some ideas from the proof of \cite[Theorem 1.3]{zsw19}.
(i) For any $b\in \mathbb{L}(\delta(\cdot))$, let us show
\begin{equation}\label{e3.x6}
\mathcal{M}_{\alpha,\,b}: L^{r(\cdot)}(\rn) \rightarrow L^{q(\cdot)}(\rn)
\end{equation}
holds true. Let $f\in L^{r(\cdot)}(\rn)$ and  $[\alpha+\delta(\cdot)]/n=1/r(\cdot)-1/q(\cdot)$. From Definition \ref{d2.4}(i) and \eqref{e2.13}, we
deduce that, for any $x\in\rn$,
\begin{align*}
|\mathcal{M}_{\alpha,\,b}(f)(x)|
&=\sup_{Q\ni x}\frac{1}{|Q|^{1-\frac{\alpha}{n}}}\int_Q|b(x)-b(y)||f(y)|\,dy\\\nonumber
&\leq \|b\|_{\mathbb{L}(\delta(\cdot))}\sup_{Q\ni x}\frac{1}{|Q|^{1-\frac{\alpha}{n}}}\int_Q|x-y|^{\delta(x)}|f(y)|\,dy\\\nonumber
&\leq \|b\|_{\mathbb{L}(\delta(\cdot))}\sup_{Q\ni x}\frac{1}{|Q|^{1-\frac{\alpha+\delta(x)}{n}}}\int_Q|f(y)|\,dy\\\nonumber
&\leq \|b\|_{\mathbb{L}(\delta(\cdot))}\mathcal{M}_{\alpha+\delta(x)}(f)(x).\nonumber
\end{align*}
Then, by this and
$\mathcal{M}_{\alpha+\delta(\cdot)}: L^{r(\cdot)}(\rn)\to L^{q(\cdot)}(\rn)$ (see Remark \ref{r2.1}) with the assumptions (i) and (iii) of Theorem \ref{t3.4}, we obtain
$$\|\mathcal{M}_{\alpha,\,b}(f)\|_{L^{q(\cdot)}(\rn)}\leq C\|b\|_{\mathbb{L}(\delta(\cdot))}\|\mathcal{M}_{\alpha+\delta(\cdot)}(f)\|_{L^{q(\cdot)}(\rn)}\leq C\|b\|_{\mathbb{L}(\delta(\cdot))}\|f\|_{L^{r(\cdot)}(\rn)}$$
and hence \eqref{e3.x6} holds true.

(ii) Conversely, suppose that $\mathcal{M}_{\alpha,\,b}$: $L^{r(\cdot)}(\mathbb{R}^n)\rightarrow L^{q(\cdot)}(\mathbb{R}^n)$, let us prove
$b\in \mathbb{L}(\delta(\cdot))$.

For any fixed cube $Q$, noting that for all $x\in Q$, we have,
%\begin{align*}
%|b(x)-b_Q|
%&\leq \frac{1}{|Q|}\int_Q|b(x)-b(y)|\,dy\\
%&=\frac{1}{|Q|}\int_Q|b(x)-b(y)|\chi_Q(y)\,dy\\
%&\leq |Q|^{-\frac{\alpha}{n}}\mathcal{M}_{\alpha,\,b}(\chi_Q)(x).
%\end{align*}
%Then, by this, for all $x\in \mathbb{R}^n$, we have
\begin{align*}
|(b(x)-b_Q)\chi_Q(x)|\leq |Q|^{-\frac{\alpha}{n}}\mathcal{M}_{\alpha,\,b}(\chi_Q)(x).
\end{align*}
By this and that $\mathcal{M}_{\alpha,\,b}$ is bounded from $L^{r(\cdot)}(\mathbb{R}^n)$ to $L^{q(\cdot)}(\mathbb{R}^n)$, we obtain
\begin{align*}
\|(b-b_Q)\chi_Q\|_{L^{q(\cdot)}(\mathbb{R}^n)}
\leq |Q|^{-\frac{\alpha}{n}}\|\mathcal{M}_{\alpha,\,b}(\chi_Q)\|_{L^{q(\cdot)}(\mathbb{R}^n)}
\leq C|Q|^{-\frac{\alpha}{n}}\|\chi_Q\|_{L^{r(\cdot)}(\mathbb{R}^n)},
\end{align*}
which implies that
\begin{equation}\label{e3.14}
\frac{\|(b-b_Q)\chi_Q\|_{L^{q(\cdot)}(\mathbb{R}^n)}}{\|\chi_Q\|_{L^{r(\cdot)}(\mathbb{R}^n)}}\leq C|Q|^{-\frac{\alpha}{n}}.
\end{equation}
Since $1/q^\prime(\cdot)=1/p^\prime(\cdot)+[1/\beta+\alpha/n-1/r(\cdot)]$, by (i) and (ii) of Property \ref{p2.2} and \eqref{e3.14}, we conclude that
\begin{align}\label{e3.x2}
&\frac{1}{|Q|^{\frac{1}{\beta}}\|\chi_Q\|_{L^{p^\prime(\cdot)}(\mathbb{R}^{n})}}\int_Q|b(x)-b_Q|\,dx\\\nonumber
%&=\frac{1}{|Q|^{\frac{1}{\beta}}\|\chi_Q\|_{L^{p^\prime(\cdot)}(\mathbb{R}^{n})}}\int_Q|b(x)-b_Q|\chi_Q(x)\,dx\\\nonumber
&\quad\leq \frac{C}{|Q|^{\frac{1}{\beta}}\|\chi_Q\|_{L^{p^\prime(\cdot)}(\mathbb{R}^{n})}}\|(b-b_Q)\chi_Q\|_{L^{q(\cdot)}(\mathbb{R}^n)}
\|\chi_Q\|_{L^{q^\prime(\cdot)}(\mathbb{R}^n)}\\\nonumber
&\quad\leq \frac{C}{|Q|^{\frac{1}{\beta}}\|\chi_Q\|_{L^{p^\prime(\cdot)}(\mathbb{R}^{n})}}\|(b-b_Q)\chi_Q\|_{L^{q(\cdot)}(\mathbb{R}^n)}
\|\chi_Q\|_{L^{p^\prime(\cdot)}(\mathbb{R}^n)}\|\chi_Q\|_{L^{(\frac{1}{\beta}+\frac{\alpha}{n}-\frac{1}{r(\cdot)})^{-1}}(\mathbb{R}^n)}\\\nonumber
&\quad=\frac{C}{|Q|^{\frac{1}{\beta}}}\frac{\|(b-b_Q)\chi_Q\|_{L^{q(\cdot)}(\mathbb{R}^n)}}{\|\chi_Q\|_{L^{r(\cdot)}(\mathbb{R}^n)}}
\|\chi_Q\|_{L^{r(\cdot)}(\mathbb{R}^n)}\|\chi_Q\|_{L^{(\frac{1}{\beta}+\frac{\alpha}{n}-\frac{1}{r(\cdot)})^{-1}}(\mathbb{R}^n)}\\\nonumber
&\quad\leq \frac{C}{|Q|^{\frac{1}{\beta}+\frac{\alpha}{n}}}\|\chi_Q\|_{L^{r(\cdot)}(\mathbb{R}^n)}
\|\chi_Q\|_{L^{(\frac{1}{\beta}+\frac{\alpha}{n}-\frac{1}{r(\cdot)})^{-1}}(\mathbb{R}^n)}.
\end{align}
Let $h^\prime(\cdot):=[1/\beta+\alpha/n-1/r(\cdot)]^{-1}$.
 Then we obtain $$\frac{1}{r(\cdot)}+\frac{1}{h^\prime(\cdot)}=1-\lf[1-\lf(\frac{1}{\beta}+\frac{\alpha}{n}\r)\r]\quad {\rm and}\quad h(\cdot)=\frac{1}{1-\frac{1}{h^\prime(\cdot)}}=\frac{1}{1-\frac{1}{\beta}-\frac{\alpha}{n}+\frac{1}{r(\cdot)}},$$ which implies that $$h_+=\frac{1}{1-\frac{1}{\beta}-\frac{\alpha}{n}+\frac{1}{r_+}}<\frac{1}{1-\frac{1}{\beta}-\frac{\alpha}{n}}.$$
Therefore, by Lemma \ref{l2.4}(ii), we obtain
\begin{align}\label{e3.x3}
\|\chi_Q\|_{L^{r(\cdot)}(\mathbb{R}^n)}\|\chi_Q\|_{L^{(\frac{1}{\beta}+\frac{\alpha}{n}-\frac{1}{r(\cdot)})^{-1}}(\mathbb{R}^n)}\leq C|Q|^{\frac{1}{\beta}+\frac{\alpha}{n}}.
\end{align}
By this and \eqref{e3.x2}, we obtain
$$\frac{1}{|Q|^{\frac{1}{\beta}}\|\chi_Q\|_{L^{p^\prime(\cdot)}(\mathbb{R}^{n})}}\int_Q|b(x)-b_Q|\,dx\leq C.$$
This shows that $b\in\mathbb{\widetilde L}(\delta(\cdot))$ and
hence by Property \ref{p2.1} with the assumptions (ii) and (iii) of Theorem \ref{t3.4}, $b\in\mathbb{L}(\delta(\cdot))$. We finish the proof
of Theorem \ref{t3.4}.
\end{proof}

Let $\gamma\geq0$. For a fixed cube $Q_0$, the {\it fractional maximal operator $\mathcal{M}_{\gamma,\,Q_0}$} with respect to $Q_0$ of a locally integrable function $f$ is defined by
$$\mathcal{M}_{\gamma,\,Q_0}(f)(x):=\sup_{Q\ni x,\,Q\subseteq Q_0}\frac{1}{|Q|^{1-\frac{\gamma}{n}}}\int_Q|f(y)|\,dy,
\ \ x\in\rn,$$
where the supremum is taken over all cubes $Q$ such that $x\in Q\subseteq Q_0$.
When $\gamma=0$, we simply write $\mathcal{M}_{Q_0}$ instead of $\mathcal{M}_{0,\,Q_0}$.

\begin{proof}[Proof of Theorem \ref{t3.5}]
%To prove Theorem \ref{t3.5}, we borrow some ideas from the proof of \cite[Theorem 1.1]{zsw19}.

(i) For any $b\in \mathbb{L}(\delta(\cdot))$ and $b\geq 0$, let us show
\begin{equation}\label{e3.x7}
[b,\,\mathcal{M}_\alpha]: L^{r(\cdot)}(\rn)\rightarrow L^{q(\cdot)}(\rn)
\end{equation}
holds true.
Let $f\in L^{r(\cdot)}(\rn)$ and $[\alpha+\delta(\cdot)]/n=1/r(\cdot)-1/q(\cdot)$. For $x\in\rn$, we have
\begin{align*}
|[b,\,\mathcal{M}_\alpha](f)(x)|
&=\lf|\sup_{Q\ni x}\frac{1}{|Q|^{1-\frac{\alpha}{n}}}\int_Q b(x)|f(y)|dy-\sup_{Q\ni x}\frac{1}{|Q|^{1-\frac{\alpha}{n}}}\int_Q |b(y)f(y)|\,dy\r|\\\nonumber
&\leq \sup_{Q\ni x}\frac{1}{|Q|^{1-\frac{\alpha}{n}}}\int_Q |b(x)-b(y)||f(y)|\,dy
=\mathcal{M}_{\alpha,\,b}(f)(x).\nonumber
\end{align*}
By this and Theorem \ref{t3.4}, we obtain
$$ \|[b,\,\mathcal{M}_\alpha](f)\|_{L^{q(\cdot)}(\rn)}\le \|\mathcal{M}_{\alpha,\,b}(f)\|_{L^{q(\cdot)}(\rn)}\leq C||b||_{\mathbb{L}(\delta(\cdot))}\|f\|_{L^{r(\cdot)}(\rn)}$$
and hence \eqref{e3.x7} holds true.

(ii) Conversely, suppose that $[b,\,\mathcal{M}_\alpha]$: $L^{r(\cdot)}(\mathbb{R}^n)\rightarrow L^{q(\cdot)}(\mathbb{R}^n)$, let us prove \begin{equation}\label{e3.x8}
b\in \mathbb{L}(\delta(\cdot))\quad {\rm and}\quad b\ge 0.
\end{equation}

For any fixed cube $Q$, noting that for all $x\in Q$, we have
$$\mathcal{M}_\alpha(\chi_Q)(x)=\mathcal{M}_{\alpha,\,Q}(\chi_Q)(x)=|Q|^{\frac{\alpha}{n}} \quad {\rm and} \quad \mathcal{M}_\alpha(b\chi_Q)(x)=\mathcal{M}_{\alpha,\,Q}(b)(x) \ (\mathrm{see}\ \cite[(2.4)]{zw09}).$$
Then, for any $x\in Q$, from this and \eqref{e2.10}, we deduce that
\begin{align*}
b(x)-|Q|^{-\frac{\alpha}{n}}\mathcal{M}_{\alpha,\,Q}(b)(x)
&=|Q|^{-\frac{\alpha}{n}}\lf[b(x)|Q|^{\frac{\alpha}{n}}-\mathcal{M}_{\alpha,\,Q}(b)(x)\r]\\
&=|Q|^{-\frac{\alpha}{n}}\lf[b(x)\mathcal{M}_\alpha(\chi_Q)(x)-\mathcal{M}_\alpha(b\chi_Q)(x)\r]\\
&=|Q|^{-\frac{\alpha}{n}}[b,\,\mathcal{M}_\alpha](\chi_Q)(x),
\end{align*}
which implies that
$$\lf(b(x)-|Q|^{-\frac{\alpha}{n}}\mathcal{M}_{\alpha,\,Q}(b)(x)\r)\chi_Q(x)=|Q|^{-\frac{\alpha}{n}}[b,\,\mathcal{M}_\alpha](\chi_Q)(x)\chi_Q(x).$$
By this and $[b,\,\mathcal{M}_\alpha]:\,L^{r(\cdot)}(\rn)\to L^{q(\cdot)}(\rn)$, we obtain
\begin{align*}
\lf\|\lf(b-|Q|^{-\frac{\alpha}{n}}\mathcal{M}_{\alpha,\,Q}(b)\r)\chi_Q\r\|_{L^{q(\cdot)}(\mathbb{R}^n)}
%&\leq |Q|^{-\frac{\alpha}{n}}\|[b,\,\mathcal{M}_\alpha](\chi_Q)\|_{L^{q(\cdot)}(\mathbb{R}^n)}\\
\leq C|Q|^{-\frac{\alpha}{n}}\|\chi_Q\|_{L^{r(\cdot)}(\mathbb{R}^n)},
\end{align*}
which implies that
\begin{equation}\label{e3.16}
\frac{\lf\|\lf(b-|Q|^{-\frac{\alpha}{n}}\mathcal{M}_{\alpha,\,Q}(b)\r)\chi_Q\r\|_{L^{q(\cdot)}(\mathbb{R}^n)}}{\|\chi_Q\|_{L^{r(\cdot)}(\mathbb{R}^n)}}\leq
C|Q|^{-\frac{\alpha}{n}}.
\end{equation}
Now, let us show $b\in \mathbb{L}(\delta(\cdot))$. By Property \ref{p2.1}, we only need to show $b\in \mathbb{\widetilde L}(\delta(\cdot))$. For any fixed cube $Q$, let $E:=\{x\in Q: b(x)\leq b_Q\}$.
For $x\in E$, we have
$$b(x)\leq b_Q\leq |b_Q|\leq |Q|^{-\frac{\alpha}{n}}\mathcal{M}_{\alpha,\,Q}(b)(x).$$
Thus,
\begin{align}\label{e4.4}
|b(x)-b_Q|\leq \lf|b(x)-|Q|^{-\frac{\alpha}{n}}\mathcal{M}_{\alpha,\,Q}(b)(x)\r|.
\end{align}
Then, by the mean value property of $b_Q$, \eqref{e4.4}, (i) and (ii) of Property \ref{p2.2} with $1/q^\prime(\cdot)=1/p^\prime(\cdot)+(1/\beta+\alpha/n-1/r(\cdot))$, \eqref{e3.16} and \eqref{e3.x3}, we conclude that
\begin{align}\label{e4.3}
&\frac{1}{|Q|^{\frac{1}{\beta}}\|\chi_Q\|_{L^{p^\prime(\cdot)}(\mathbb{R}^n)}}\int_Q|b(x)-b_Q|\,dx\\\nonumber
%&=\frac{1}{|Q|^{\frac{1}{\beta}}\|\chi_Q\|_{L^{p^\prime(\cdot)}(\mathbb{R}^n)}}\int_{E\bigcup(Q\setminus E)}|b(x)-b_Q|\,dx\\\nonumber
&\hs=\frac{2}{|Q|^{\frac{1}{\beta}}\|\chi_Q\|_{L^{p^\prime(\cdot)}(\mathbb{R}^n)}}\int_E|b(x)-b_Q|\,dx\\\nonumber
%&\leq \frac{2}{|Q|^{\frac{1}{\beta}}\|\chi_Q\|_{L^{p^\prime(\cdot)}(\mathbb{R}^n)}}\int_E|b(x)-|Q|^{-\frac{\alpha}{n}}\mathcal{M}_{\alpha,\,Q}(b)(x)|\,dx\\\nonumber
%&\leq \frac{2}{|Q|^{\frac{1}{\beta}}\|\chi_Q\|_{L^{p^\prime(\cdot)}(\mathbb{R}^n)}}\int_Q|b(x)-|Q|^{-\frac{\alpha}{n}}\mathcal{M}_{\alpha,\,Q}(b)(x)|\,dx\\\nonumber
&\hs\leq\frac{2}{|Q|^{\frac{1}{\beta}}\|\chi_Q\|_{L^{p^\prime(\cdot)}
(\mathbb{R}^n)}}\int_Q|b(x)-|Q|^{-\frac{\alpha}{n}}
\mathcal{M}_{\alpha,\,Q}(b)(x)|\chi_Q(x)\,dx\\\nonumber
&\hs\leq \frac{C}{|Q|^{\frac{1}{\beta}}\|\chi_Q\|_{L^{p^\prime(\cdot)}
(\mathbb{R}^n)}}\|(b(x)-|Q|^{-\frac{\alpha}{n}}
\mathcal{M}_{\alpha,\,Q}(b)(x))\chi_Q\|_{L^{q(\cdot)}(\mathbb{R}^n)}
\|\chi_Q\|_{L^{q^\prime(\cdot)}(\mathbb{R}^n)}\\\nonumber
&\hs\leq \frac{C}{|Q|^{\frac{1}{\beta}}\|\chi_Q\|_{L^{p^\prime(\cdot)}
(\mathbb{R}^n)}}\|(b(x)-|Q|^{-\frac{\alpha}{n}}
\mathcal{M}_{\alpha,\,Q}(b)(x))\chi_Q\|_{L^{q(\cdot)}(\mathbb{R}^n)}\\ \nonumber
&\hs\hs\times\|\chi_Q\|_{L^{p^\prime(\cdot)}(\mathbb{R}^n)}\|\chi_Q\|_{L^{(\frac{1}{\beta}
+\frac{\alpha}{n}-\frac{1}{r(\cdot)})^{-1}}(\mathbb{R}^n)}
\\\nonumber
&\hs\leq\frac{C}{|Q|^{\frac{1}{\beta}}}\frac{\lf\|\lf(b-|Q|^{-\frac{\alpha}{n}}
\mathcal{M}_{\alpha,\,Q}(b)\r)\chi_Q\r\|_{L^{q(\cdot)}(\mathbb{R}^n)}}
{\|\chi_Q\|_{L^{r(\cdot)}(\mathbb{R}^n)}} \|\chi_Q\|_{L^{r(\cdot)}(\mathbb{R}^n)}\|\chi_Q\|_{L^{(\frac{1}{\beta}+\frac{\alpha}{n}-\frac{1}{r(\cdot)})^{-1}}(\mathbb{R}^n)}\\\nonumber
&\hs\leq \frac{C}{|Q|^{\frac{1}{\beta}}}\cdot |Q|^{-\frac{\alpha}{n}}\cdot |Q|^{\frac{1}{\beta}+\frac{\alpha}{n}}=C,
\end{align}
which implies that $b\in\mathbb{\widetilde L}(\delta(\cdot))$ and hence by Property \ref{p2.1} with the assumptions (ii) and (iii) of Theorem \ref{t3.5}, $b\in \mathbb{L}(\delta(\cdot))$.

Next, let us prove $b\geq 0$. To do this, it suffices to show $b^-=0$, where $b^-:=-\min\{b,0\}$. Let $b^+:=|b|-b^-$, then $b=b^+-b^-$. For any fixed cube $Q$,
$$0\leq b^+(x)\leq |b(x)|\leq |Q|^{-\frac{\alpha}{n}}\mathcal{M}_{\alpha,\,Q}(b)(x),\quad x\in Q.$$
Therefore, for $x\in Q$, we have
$$0\leq b^-(x)\leq |Q|^{-\frac{\alpha}{n}}\mathcal{M}_{\alpha,\,Q}(b)(x)-b^+(x)+b^-(x)=|Q|^{-\frac{\alpha}{n}}\mathcal{M}_{\alpha,\,Q}(b)(x)-b(x).$$
Then, for any cube $Q$, it follows from \eqref{e4.3} that
\begin{align}\label{e3.4}
\frac{1}{|Q|^{\frac{1}{\beta}}\|\chi_Q\|_{L^{p^\prime(\cdot)}(\mathbb{R}^n)}}\int_Q b^-(x)\,dx\leq \frac{1}{|Q|^{\frac{1}{\beta}}\|\chi_Q\|_{L^{p^\prime(\cdot)}(\mathbb{R}^n)}}\int_Q||Q|^{-\frac{\alpha}{n}}\mathcal{M}_{\alpha,\,Q}(b)(x)-b(x)|\,dx\leq C.
\end{align}
By this, letting the side length of $Q$ sufficient small and using Lemma \ref{l3.4}, we obtain
$$\frac1{|Q|}\int_Qb^-(x)\,dx\leq C|Q|^{\frac{1}{\beta}+\frac{1}{(p^\prime)_+}-1}.$$
Thus, $b^-=0$ due to $1/\beta+1/(p')_+>1$ and  Lebesgue's differentiation theorem.
Finally, we finish the proof of \eqref{e3.x8} and hence the proof
of Theorem \ref{t3.5}.
\end{proof}

\section*{Funding and/or Conflicts of interests/Competing interests}
This project is supported by NSFC (Nos.\,11861062,\,12161083), the Natural Science Foundation of Xinjiang Uyghur Autonomous Region (No.\,2020D01C048)  and  Xinjiang key laboratory of applied mathematics (No.\,XJDX1401).

The authors declare that there is no conflict of interests regarding the publication of this paper.

\bigskip\medskip

\noindent Xuechun Yang, Zhenzhen Yang and Baode Li (Corresponding author),
\medskip

\noindent College of Mathematics and System Sciences\\
 Xinjiang University\\
 Urumqi, 830017\\
P. R. China
\smallskip

\noindent{E-mail }:\\
\texttt{2760978447@qq.com} (Xuechun Yang)\\
\texttt{1756948251@qq.com} (Zhenzhen Yang)\\
\texttt{baodeli@xju.edu.cn} (Baode Li)\\
\bigskip \medskip

\end{document}